\newcommand\notsotiny{\@setfontsize\notsotiny{6}{7}}
\crefname{hypothesis}{Hypothesis}{Hypotheses}
\DeclareRobustCommand{\cev}[1]{%
  \mathpalette\do@cev{#1}%
}
\newcommand{\do@cev}[2]{%
  \fix@cev{#1}{+}%
  \reflectbox{$\m@th#1\vec{\reflectbox{$\fix@cev{#1}{-}\m@th#1#2\fix@cev{#1}{+}$}}$}%
  \fix@cev{#1}{-}%
}
\newcommand{\fix@cev}[2]{%
  \ifx#1\displaystyle
    \mkern#23mu
  \else
    \ifx#1\textstyle
      \mkern#23mu
    \else
      \ifx#1\scriptstyle
        \mkern#22mu
      \else
        \mkern#22mu
      \fi
    \fi
  \fi
}
\title{A probabilistic scheme for semilinear nonlocal diffusion equations with volume constraints\thanks{This manuscript has been authored by UT-Battelle, LLC, under contract DE-AC05-00OR22725 with the US Department of Energy (DOE). The US government retains and the publisher, by accepting the article for publication, acknowledges that the US government retains a nonexclusive, paid-up, irrevocable, worldwide license to publish or reproduce the published form of this manuscript, or allow others to do so, for US government purposes. DOE will provide public access to these results of federally sponsored research in accordance with the DOE Public Access Plan.
}}
\author{M.~Yang\thanks{Fusion Energy Division, Oak Ridge National Laboratory, Oak Ridge, TN
  (\email{yangm@ornl.gov}, \email{delcastillod@ornl.gov}).}
\and G.~Zhang\thanks{Computer Science and Mathematics Division, Oak Ridge National Laboratory, Oak Ridge, TN (\email{zhangg@ornl.gov}).}
\and D.~Del-Castillo-Negrete\footnotemark[2]
\and Y.~Cao\thanks{Department of Mathematics, Auburn University, Auburn, AL (\email{yzc0009@auburn.edu}).}
}
\begin{document}

\maketitle

\begin{abstract}
This work presents a probabilistic scheme for solving semilinear nonlocal diffusion equations with volume constraints and integrable kernels. The nonlocal model of interest is defined by a time-dependent semilinear  partial integro-differential equation (PIDE), in which the integro-differential operator consists of both local convection-diffusion and nonlocal diffusion operators. Our numerical scheme is based on the direct approximation of the nonlinear Feynman-Kac formula that establishes a link between nonlinear PIDEs and stochastic differential equations. 
The exploitation of the Feynman-Kac representation successfully avoids solving dense linear systems arising from nonlocality operators. Compared with existing stochastic approaches, our method can achieve first-order convergence after balancing the temporal and spatial discretization errors, which is a significant improvement of existing probabilistic/stochastic methods for nonlocal diffusion problems. Error analysis of our numerical scheme is established. 
The effectiveness of our approach is shown in two numerical examples. The first example considers a three-dimensional nonlocal diffusion equation to numerically verify the error analysis results. The second example presents a  physics problem motivated by the study of heat transport in magnetically confined fusion plasmas.

\end{abstract}

\begin{keywords}
nonlocal diffusion equations, Feynman-Kac formula, stochastic differential equation, transport, exit time, compound Poisson process, Brownian motion
\end{keywords}

\begin{AMS}
  68Q25, 68R10, 68U05
\end{AMS}

\section{Introduction}
    
    
    
    

Nonlocal equations appear in many areas of science and engineering. Of particular interest is the study of transport where models involving integrodifferential operators have been proposed to overcome the limitations of  local models based on advection-diffusion equations.
The cornerstone of local transport models is the Fourier-Fick’s law that establishes a linear relationship between the fluxes and the gradients, which leads to local diffusion operators when combined with mass conservation. The widespread use of this type of models is also rooted in connection with the continuous-time random walk driven by Brownian motion, and the characteristic scaling of diffusion processes according to which the mean-squared displacement grows linearly with time. 

However, despite the apparent ubiquity of local diffusive transport, departures from this paradigm have been documented experimentally and  numerically. For example, early work on the study of transport in rotating fluids of interest to geophysical fluid dynamics showed that the presence of coherent structures (e.g., vortices and zonal flows) gives rise to anomalous super-diffusion processes for which the standard  Brownian motion description does not apply \cite{solomon1993observation, del1998asymmetric}. The study of magnetically confined plasmas provides another important example. As in the case of fluids, coherent structures in turbulent plasmas introduce long waiting times and anomalous long displacements known as ``Levy flights" that invalidate the use of local transport models \cite{del2004fractional,sanchez2008nature}. This phenomenology has motivated the development of nonlocal models in which non-diffusive processes in plasmas are described using integrodifferential operators in general and fractional derivatives in particular \cite{van2004probabilistic,del2008fractional}.

Beyond their use in fluids and plasmas, nonlocal equations have found applicability in several other areas of science and engineering including pattern formation \cite{henry2000fractional} and front propagation \cite{mancinelli2002superfast,del2003front} in reaction-nonlocal-diffusion systems, image processing \cite{buades2010image,gilboa2007nonlocal}, option prices in financial markets with jumps \cite{cartea2007fractional}, turbulence \cite{chen2006speculative,gunzburger2018analysis}, groundwater flow and solute transport \cite{benson2000fractional}, peridynamic models of fracture dynamics \cite{silling2000reformulation}, and nonlocal models of epidemic diseases \cite{ahmed2007fractional} among many others. 

Given the vast applications of nonlocal models, it is not surprising that significant efforts have been devoted to the computational aspects of nonlocal equations. 
The two main approaches to the numerical solution of nonlocal diffusion problems can be roughly classified as continuum deterministic methods and particle-based stochastic methods. Deterministic approaches are usually based on extensions of numerical methods for local partial differential equations (PDEs), including finite element method \cite{CHEN20111237,doi:10.1137/13091631X,ainsworth2018towards,doi:10.1137/17M1144696}, finite difference method \cite{lynch2003numerical,meerschaert2004finite,del2006fractional,doi:10.1137/13091631X,DU2016605}, and kernel collocation method \cite{doi:10.1137/19M1277801,TRASK2019151}, among many others. We refer to \cite{d2020numerical} for an overview of some of these methods. 
However, despite the relative success of deterministic methods in local problems, their use in nonlocal problems faces challenges, including the significant increase in computational cost specially in high-dimensional domains. For example, finite element methods need to handle a weak formulation with a sextuple integral in three-dimensional cases, and the large volume of the nonlocal interaction domains dramatically deteriorates the sparsity of the resulting linear system. Moreover, in the presence of  nonlinear forcing terms, an iterative nonlinear solver needs to wrap around the linear solver, making the entire solution process computationally challenging.  
Even though significant efforts have been made to improve the efficiency by exploiting multigrid solvers \cite{Ainsworth2017AspectsOA} or the Toeplitz structure of the linear system \cite{WANG20108095,gao2016fokker} of the fractional Laplacian, the computational inefficiency remains a bottleneck that hinders the progress on the broad applicability of nonlocal diffusion models to  scientific and engineering problems involving high-dimensional irregular domain and nonlinear forcing. 

On the other hand, particle-based stochastic methods approach the numerical solution of nonlocal diffusion problems by exploiting the relation between nonlocal integro-differential operators and general stochastic jump processes 
\cite{cartea2007fluid,2011PhRvE..83a2105B,du2014nonlocal},
a special case of which is the connection between $\alpha$-stable processes  and fractional Laplacians, see for example  \cite{metzler2000random} and references therein. Although this approach does not require assembling and solving dense linear systems and the simulations of a large number of trajectories and can be easily parallelized, it suffers from the slow convergence of random walk models (e.g., $\frac{1}{2}$-order convergence rate with respect to the number of time steps), which requires a very large number of samples to achieve a prescribed accuracy. Additionally, when having an inhomogeneous or even nonlinear forcing term, the nonlocal diffusion equation is no longer the master equation of an underlying jump process for which  random walk methods can be applied.


As an alternative to the above-described deterministic (e.g., finite differences) and stochastic methods (e.g., continuous time random walk models), we present here a new probabilistic scheme for time-dependent nonlocal equations. The specific model under consideration is a semilinear partial integro-differential equation (PIDE) including a local advection-diffusion operator, a nonlocal operator with an integral kernel, and a nonlinear forcing term. The theoretical foundation of our method is rooted in the seminal works on the nonlinear Feynman-Kac theory \cite{Pardoux:1990jua,Peng:1991ku,pardoux1994backward,Pardoux:1992jo,pardoux1998backward,barles1997backward} that establishes a connection between nonlinear PDEs/PIDEs and stochastic processes. 
Similar to our previous use of the Feynman-Kac theory for local problems \cite{yang2021feynman}, the proposed method is a kind of hybrid approach in that it is based on the stochastic representation of the nonlocal kernel but the actual computation is reduced to the continuum deterministic evaluation of integrals bypassing the need of discrete sampling of stochastic trajectories. 

The nonlinear Feynman-Kac theory has been exploited to solve PIDEs in unbounded domains\cite{Bouchard:2008cp,Zhang:2016fb,lejay2014numerical,bouchard2009discrete}, usually achieving $\frac{1}{2}$-order convergence rates with respect to the number of time steps. However, the vast majority of applications of nonlocal models demand the use of finite domains. For example, the application of nonlocal fractional transport models in magnetically confined fusion plasmas requires the regularization of the fractional derivatives to incorporate physically meaningful boundary conditions \cite{del2006fractional}.
The use of volume constraints as a proxy for boundary conditions that might not be defined for a kernel is another approach in the formulation of well-posed nonlocal problems in bounded domains \cite{d2013fractional,du2013nonlocal, du2014nonlocal,du2012analysis}. 
The advantage of the use of volume constraints can be intuitively appreciated in the context of stochastic processes. In the case of local diffusion, the corresponding stochastic process is continuous, and the boundary corresponds to the exit location of the trajectory. However, in a nonlocal problem the underlying stochastic process is a discontinuous jump process and the trajectory can exit the domain without ``touching" the boundary. Adding a volume constrain provides a fix to this problem by identifying the exit of the bounded domain as an arrival to the added volume. 
Here we adopt the volume constraints approach as a natural extension of Dirichlet boundary conditions, and show that our method provides an accurate and efficient numerical technique that improves the convergence rate of existing methods and opens the possibility of applying nonlocal models to high-dimensional practical problems with a particular interest to magnetically confined fusion plasmas.  


In the proposed method the numerical solution of the PIDE's is reduced to the accurate approximation of the expectation value in the Feynman-Kac representation. This task consists of several steps, including discretizing the underlying stochastic process, approximating the nonlinear forcing term, handling the exit time (i.e., the random time that the stochastic process exits the bounded domain), decomposing the entire mathematical expectation into a set of conditional expectations, and picking quadrature weights and abscissa for each conditional expectation. The key algorithmic development and error analysis challenge is the low convergence rate caused by the exit time. To address this problem,
we develop an effective strategy to successfully improve the convergence rate to first-order that is comparable to PDE-based approaches. On the other hand, our method does not require assembling and solving possibly dense linear systems, which significantly improves the overall computational efficiency. Among the main contributions of this work are:
(i) Development of a fully discrete scheme for the semilinear nonlocal diffusion equations with volume constraints and integrable kernels;
(ii) Error estimates of the proposed fully discrete scheme, which demonstrates the first-order convergence with respect to the time step size; 
(iii) Demonstration of our method's performance on 3D semilinear nonlocal diffusion problems in non-trivial domains, and an anisotropic  nonlocal heat transport problem of interest to magnetically confined controlled nuclear fusion plasmas.

The outline of the rest of the paper is as follows. In Section \ref{sec:setting} we formulate the nonlocal volume-constrained problem of interest. The details of the proposed method are described in Section \ref{num_scheme}, and the corresponding error analysis for the fully discrete scheme is studied in Section \ref{sec:err_analy}. Section \ref{sec_ex} is devoted to examples including nonlocal diffusion in four different 3D domains, and nonlocal anisotropic transport in a 3D toroidal domain of interest to controlled nuclear fusion.

\section{Problem setting}\label{sec:setting}
Let $\mathcal{D} \subset \mathbb{R}^d$ denote a bounded open domain and $[0,T]$ with $T>0$ denote a temporal domain. The domain $\mathcal{D}_{\rm v}$ is the interaction domain that is disjoint from $\mathcal{D}$.
The PIDE of interest is a time-dependent semilinear nonlocal volume-constrained diffusion equation, i.e., 
%
\begin{equation}\label{pide}
\begin{aligned}
\dfrac{\partial u}{\partial t}(t,x) - \mathcal{L}[u](t,x) & = f(t,x,u), \quad\quad \forall (t,x) \in (0,T] \times \mathcal{D},\\
u(0,x) & = \phi_0(x), \quad\quad\;\;\;\;\;\forall x\in \mathcal{D}\cup \mathcal{D}_{\rm v},\\[4pt]
u(t,x) & = \phi_{\rm v}(t,x), \quad\quad \;\;\forall (t,x)\in (0,T] \times\mathcal{D}_{\rm v},
\end{aligned}
\end{equation}
where $f(t,x,u)$ is the forcing term that could be a nonlinear function of $u$, $u(0,x) = \phi_0(x)$ is an initial condition, and $u(t,x) = \phi_{\rm v}(t,x)$ is the volume constraint acting on the nonzero domain $\mathcal{D}_{\rm v}$.
%
The volume constraint is a natural extension of the boundary condition for local PDEs \cite{du2014nonlocal,du2012analysis}. The partial integro-differential operator $\mathcal{L}$ in Eq.~\eqref{pide} is defined by
\begin{equation}\label{fpon}
\begin{aligned}
\mathcal{L}[u](t,x) = &
\sum_{i=1}^{d}\frac{\partial}{\partial x_{i}} [B_{i}(t,x) u(t,x)] +
\sum_{i,j=1}^{d}\frac{\partial^{2} }{\partial x_{i} \partial x_{j}}[K_{ij}(t,x)u(t,x)] \\
&+\int_{E}[u(t,x+c(t,x,q)) - u(t,x)]\gamma(q)dq,
\end{aligned}
\end{equation}
where $B = (B_1, \ldots, B_d) \in \mathbb{R}^d$ is the local convection coefficient, 
$K = [K_{ij}]\in \mathbb{R}^{d\times d}$ 
is the local diffusion coefficient satisfying $K = \frac{1}{2}\sigma \sigma^{\top}$ with $\sigma \in \mathbb{R}^{d\times d}$, $c(t,x,q)\in \mathbb{R}^d$ is the jump amplitude,  $E\subset \mathbb{R}^d$ defines the interaction domain for $x\in\mathcal{D}$, and $\gamma(q)$ is the nonlocal kernel. In this work, we assume $\gamma(q)$ is nonnegative and integrable, i.e.,
\begin{equation}\label{cond_lambda}
\gamma(q) \geq 0 \text{ for } q \in E \quad \text{and}\quad \varphi(q) = \frac{\gamma(q)} {\lambda} \text{ with } \lambda =\int_{E} \gamma(q) dq < \infty,
\end{equation}
where $\varphi(q)$ can be viewed as a probability density function and the domain $E$ is bounded. In this case, the nonlocal component in $\mathcal{L}$ corresponds to the compound Possion process. The well-posdeness of the problem in Eq.~\eqref{pide} has been proved in \cite{du2014nonlocal,du2012analysis} under standard assumptions on $B$, $K$, $c$, and $\gamma$. 


It is well known that the nonlocal diffusion problem in Eq.~\eqref{pide} is computationally challenging to solve using standard PDE solvers, especially when $d \geq 3$ and the domain $E$ has a large volume. For example, the classic finite element method needs to handle a weak formulation with a sextuple integral in the case of $d=3$, and the large volume of $E$ will dramatically deteriorate the sparsity of the resulting linear system. Moreover, when having a nonlinear forcing term, an iterative nonlinear solver needs to wrap around the linear solver, making the entire solution process computationally inefficient.  
To circumvent these challenges, we will exploit the connection between the operator $\mathcal{L}$ in Eq.~\eqref{pide} and stochastic jump processes to develop an efficient and accurate probabilistic scheme.

\section{The proposed probabilistic scheme}\label{num_scheme}
In this section, we construct the proposed probabilistic scheme for the nonlocal diffusion problem in Eq.~\eqref{pide}. In Section \ref{Feynman}, we use the Feynman-Kac formula to represent the solution $u$ of the PIDE as a conditional expectation  \cite{pardoux1994backward,Pardoux:1990jua}, which serves as the foundation of our numerical scheme. In Section \ref{sec:full}, we discretize the Feynman-Kac representation to obtain an approximation to $u$. 


\subsection{The Feynman-Kac representation of the PIDE's solution}\label{Feynman}

\subsubsection{The non-divergence form of the PIDE}\label{sec3_1}
The nonlocal diffusion equation in Eq.~\eqref{pide} is given in the divergence form, but the Feynman-Kac formula requires that the integro-differential operator is written in the {non-divergence} form \cite{10.5555/129416}. Thus, we rewrite the PIDE in Eq.~\eqref{pide} in its non-divergence form, i.e., 
%
%
%
\begin{equation}\label{pidedn}
\dfrac{\partial u}{\partial t}(t,x) - {\mathcal{L}}^*[u](t,x) = g(t,x,u),
\end{equation}
where the {non-divergence} form operator $\mathcal{L}^*$ is defined by
\begin{equation}\label{Ls}
\begin{aligned}
\mathcal{L}^*[u](t,x):=&\sum_{i=1}^{d}b_{i}(t,x)\frac{\partial u}{\partial x_{i}}(t,x) + \sum_{i,j=1}^{d}K_{ij}(t,x) \frac{\partial^2 u}{\partial x_{i} x_{j}}(t,x)\\
 &+\int_{E}[u(t,x + c(t,x,q)) -u(t,x)]\gamma(q)dq,
\end{aligned}
\end{equation}
with the new drift coefficients $b_i$ defined by
$
b_{i}(t,x):= B_{i}(t,x) + 2\sum_{j=1}^{d}\frac{\partial K_{ij}}{\partial x_{j}}(t,x),
$ 
and the new forcing term $g$ given by 
\begin{equation}\label{force_g}
g(t,x,u):= f(t,x,u) + \left(\sum_{i=1}^{d}\frac{\partial B_{i}}{\partial x_{i}}(t,x)  + \sum_{i,j=1}^{d}\frac{\partial^2 K_{ij}}{\partial x_{i}\partial x_{j}}(t,x)\right)u(t,x).
\end{equation}
Note that  Eq.~\eqref{pidedn} is exactly the same as Eq.~\eqref{pide}, and in the rest of this section, we use the PIDE in Eq.~\eqref{pidedn} as the target problem to develop our probabilistic scheme. 


\subsubsection{The Feynman-Kac representation}\label{sec:rep}
%
For the purpose of the numerical method to be described in Section \ref{sec:full}, we only need to consider the Feynman-Kac formula within a small time interval. Thus, we first introduce a uniform mesh over the temporal domain $[0,T]$ as follows
\begin{equation}\label{t_mesh}
\mathcal{T} := \{0=t_{0} \le \cdots \le t_{N_t}=T\},
\end{equation}
with $\Delta t= t_{n}-t_{n-1}$, for $1\leq n \leq N_t$. In each small interval $[t_n, t_{n+1}]$, we define a {\em backward} stochastic process that starts from the location $(t_{n+1},x)$ and moves backward from $t_{n+1}$ to $t_n$, i.e.,
\vspace{0.2cm}
%
\begin{equation}\label{sde} 
\cev{X}_{s}^{n+1} = x + \int^{t_{n+1}}_{s}b(t,\cev{X}_{t}^{n+1})d{t}+ \int^{t_{n+1}}_{s}\sigma(t,\cev{X}_{t}^{n+1})dW_{t} + \sum_{k = 1}^{N_{t_{n+1} - s}} c (t,\cev{X}_{t_k+}^{n+1},q_k),
\end{equation}
where $s \in [t_n, t_{n+1}]$, and the almost sure right-hand limit of $\cev{X}_t = \{\cev{X}_t, t\in [t_n, t_{n+1}]\}$ is defined by 
\[
\cev{X}_{t+} = \lim_{s\downarrow t}\cev{X}_s.
\]
Here the coefficients $b, c$ are defined in Eq.~\eqref{Ls}, $\sigma$ results from the definition of diffusion coefficient $K$ in Eq.~\eqref{fpon},
$W_t$ is the Brownian motion with the property that $\mathbb{V}ar(W_t) = dt$, $N_{t_{n+1} - s}$ is the Poisson process following the Poisson probability distribution 
\[
\mathbb{P}(N_{t_{n+1} - s}=k) = (\lambda (t_{n+1}-s))^k \frac{e^{-\lambda (t_{n+1}-s)}}{k!},
\]
with $\lambda$ defined in Eq.~\eqref{cond_lambda}, $t_k$ for $k = 1, \ldots N_{t_{n+1} - s}$ is the instances of time that jumps occur, and $q_k$ follows the probability distribution defined by $\varphi(q)$ in Eq.~\eqref{cond_lambda}. The jump process in Eq.~\eqref{sde} is also called compound Poisson process.




\begin{remark}
The backward stochastic process $\cev{X}_{s}^{n+1}$ in Eq.~\eqref{sde} is defined independently for each time interval $[t_n, t_{n+1}]$ for the convenience in developing the numerical scheme. Thus, there is no continuous filtration from $T$ to the initial time. {We emphasize that process $\cev{X}_{s}^{n+1}$ depends on the starting location $x$, i.e., $\cev{X}_{s}^{n+1,x}$. In what follows, we omit $x$ in the superscript for notational simplicity.}
\end{remark}

We define the exit time of $\cev{X}_{s}^{n+1}$ to describe the volume constraint in Eq.~\eqref{pide} from the probabilistic perspective as follows
\begin{equation}\label{exit}
\begin{aligned}
& \tau_{n} := \sup\{s < t_{n+1}\, \big|\, \cev{X}_{s}^{n+1} \not \in \mathcal{D}, x\in \mathcal{D}\},\\
\end{aligned}
\end{equation}
where $\tau_{n}$ indicates the first instance of time $\cev{X}_{s}^{n+1}$ exits the domain $\mathcal{D}$.
Note that $\cev{X}_{s}^{n+1}$
could exit the domain $\mathcal{D}$ in two ways. The first way is that
$\cev{X}_{s}^{n+1}$ exits the domain through the boundary $\partial \mathcal{D}$; the second way is that $\cev{X}_{s}^{n+1}$
jumps out of the domain $\mathcal{D}$ without touching the boundary $\partial \mathcal{D}$.


It is well known that the operator $\mathcal{L}^*$ in Eq.~\eqref{Ls} is the {infinitesimal} generator of $\cev{X}_{s}^{n+1}$ for $s \in [\tau_{n} \vee t_{n},t_{n+1}]$. Thus, we can derive the Feynman-Kac representation \cite{dynkins, Peng:1990vu} of the PIDEs solution at $t_{n+1}$ as follows:
\begin{equation}\label{ito_for1}
u(t_{n+1},x) =  \mathbb{E}\bigg[u(\tau_{n} \vee t_{n},\cev{X}_{\tau_{n} \vee t_{n}}^{n+1}) +  \int^{t_{n+1}}_{\tau_{n} \vee t_{n}}\Big(\frac{\partial u}{\partial t}-\mathcal{L}^*[u]\Big)(t,\cev{X}_{t}^{n+1})d{t}\bigg],
\end{equation}
 where $\mathbb{E}[\cdot]$ denotes a conditional expectation, and
 $\tau_{n} \vee t_{n} :={\rm max}(\tau_{n}, t_{n})$.
%
%
If $u$ is the unique viscosity solution of the nonlocal diffusion equation in Eq.~\eqref{pide}, then the representation in Eq.~\eqref{ito_for1} can be rewritten as
\begin{equation}\label{eq_utn}
\begin{aligned}
u(t_{n+1},x)
=\mathbb{E}\bigg[u(\tau_{n} \vee t_{n},\cev{X}_{\tau_{n} \vee t_{n}}^{n+1}) +\int^{t_{n+1}}_{\tau_{n} \vee t_{n}}g(t,\cev{X}_{t}^{n+1},u(t,\cev{X}_{t}^{n+1}))d{t} \bigg],
\end{aligned}
\end{equation}
where $g$ is the forcing term defined in Eq.~\eqref{force_g}.

Instead of directly solving the PIDE in Eq.~\eqref{pide}, we intend to approximate the solution $u$ at each time step by discretizing the Feynman-Kac representation in Eq.~\eqref{eq_utn}, which will be described in the next section.

\subsection{The approximation of the Feynman-Kac representation}\label{sec:full}
The approximation of the representation of $u(t_{n+1},x)$ defined in Eq.~\eqref{eq_utn} consists of five tasks: (a) discretization of the time integral in Eq.~\eqref{eq_utn};
(b) numerical treatment of the exit time $\tau_n$; (c) approximation of the backward stochastic process $\cev{X}_{s}^{n+1}$; (d) approximation of the expectation $\mathbb{E}[\cdot]$; and (e) reconstruction of $u(t_{n},x)$ in $\mathcal{D}$. These five tasks will be accomplished in Sections \ref{sec:timedis}, \ref{sec:exit}, \ref{sec:euler}, \ref{sec:exp} and \ref{subsec:full}, respectively. 

To proceed, we extend the solution $u$ from the bounded domain $\mathcal{D}\cup\mathcal{D}_{\rm v}$ to $\mathbb{R}^d$. According to the Whitney extension theorem \cite{hestenes1941extension,10.2307/1989708}, a function of class $\mathcal{C}^m$ on a closed set in $\mathbb{R}^d$ can be extended to the entire $\mathbb{R}^d$ and the extended function is still in the class $\mathcal{C}^m$. The purpose of defining the extension of $u$ is only to ensure the mathematical rigor during the derivation of the proposed numerical scheme in the rest of this section. For example, the stochastic process $\cev{X}_{t_n}^{n+1}$ in Eq.~\eqref{sde} could move to anywhere in $\mathbb{R}^d$, but 
the expectation $\mathbb{E}[u(t_n, \cev{X}_{t_n}^{n+1})]$ is not well defined unless $u$ is extended to $\mathbb{R}^d$.  
However, the final numerical scheme does not use any information of the extension, so we only need the existence of the extension. For simplicity, we use the same notation $u$ to denote its extension in the rest of the paper.


\subsubsection{Temporal discretization}\label{sec:timedis}
We use the implicit Euler scheme to discretize the temporal integral in Eq.~\eqref{eq_utn} and obtain
\begin{equation}\label{eq_utn_approx1}
\begin{aligned}
u(t_{n+1},x)
=&\mathbb{E}\left[u(\tau_{n} \vee t_{n},\cev{X}_{\tau_{n} \vee t_{n}}^{n+1}) +(t_{n+1}-\tau_{n} \vee t_{n})g(t_{n+1},x,u(t_{n+1},x))\right] + R^{n+1}_1,
\end{aligned}
\end{equation}
where the truncation error $R_{n+1}^1$ is defined by
\begin{equation}\label{R1}
   R^{n+1}_1 =\hspace{-0.05cm} \mathbb{E}\hspace{-0.05cm}\left[\int^{t_{n+1}}_{\tau_{n} \vee t_{n}}g(t,\cev{X}_{t}^{n+1},u(t,\cev{X}_{t}^{n+1}))d{t} - (t_{n+1}-\tau_{n} \vee t_{n})g(t_{n+1},x,u(t_{n+1},x))\right].
\end{equation}
Even though other time stepping schemes could also be used here, the implicit Euler scheme has sufficient accuracy and stability to achieve the overall first-order convergence with respect to $\Delta t$.


%
\subsubsection{Treatment of the exit time}\label{sec:exit}
Now we describe how to handle the exit time $\tau_n$ in $\mathbb{E}[u(\tau_{n} \vee t_{n},\cev{X}_{\tau_{n} \vee t_{n}}^{n+1})]$ in Eq.~\eqref{eq_utn_approx1}.
The approximation of a mathematical expectation becomes challenging in the presence of an exit time. The commonly used strategies will lead to a {half-order} convergence rate with respect to $\Delta t$ \cite{Gobet:2000cj,buchmann2003computing}, which will not achieve our objective. Here, we develop a easy to use treatment for the exit time, exclusively designed for the nonlocal problem, which achieves an overall first-order convergence with respect to $\Delta t$.
%
%
To proceed, the expectation 
$\mathbb{E}[u(\tau_{n} \vee t_{n},\cev{X}_{\tau_{n} \vee t_{n}}^{n+1})]$ in Eq.~\eqref{eq_utn_approx1} can be decomposed based on different scenarios of $N_{\Delta t} = N_{t_{n+1}-t_n} $ and $\tau_{n}$ as follows.
\begin{equation}\label{e24}
\begin{aligned}
&  \mathbb{E}\left[u(\tau_{n} \vee t_{n},\cev{X}_{\tau_{n} \vee t_{n}}^{n+1})\right] = I_1 + I_2 + I_3 + I_4 + I_5,\\[2pt]
& I_1 = \mathbb{P}(N_{\Delta t} = 0, \tau_{n} \ge t_{n}) \mathbb{E} \left[ u(\tau_{n}, \cev{X}^{n+1}_{\tau_n})|N_{\Delta t} = 0, \tau_{n} \ge t_{n}\right],\\[2pt]
& I_2 = \mathbb{P}(N_{\Delta t} = 0, \tau_{n} < t_{n})\,\mathbb{E}\left[u(t_n,\cev{X}^{n+1}_{t_n})| N_{\Delta t} = 0, \tau_{n} < t_{n}\right],\\[2pt]
%
%
& I_3 = \mathbb{P}(N_{\Delta t} = 1, \tau_{n} \ge t_{n}) \mathbb{E} \left[  u(\tau_{n},\cev{X}^{n+1}_{\tau_n})|N_{\Delta t} = 1,\tau_{n} \ge t_{n}\right],\\[2pt]
& I_4 = \mathbb{P}(N_{\Delta t} = 1, \tau_{n} < t_{n}) \mathbb{E} \left[ u(t_n,\cev{X}^{n+1}_{t_n})|N_{\Delta t} = 1,\tau_{n} < t_{n}\right],\\[-2pt]
& I_5 = \sum_{k=2}^{\infty}\mathbb{P}(N_{\Delta t}  = k) \mathbb{E} \left[u(\tau_{n}\vee t_{n},\cev{X}^{n+1}_{\tau_n \vee t_n})|N_{\Delta t} =k \right].
\end{aligned}
\end{equation}
%
%
Each term in Eq.~\eqref{e24} is an expectation conditional on an event defined by $N_{\Delta t}$ and $\tau_{n}$.

Next we investigate the terms in Eq.~\eqref{e24}, to determine which are small enough to be neglected in the numerical scheme. For the terms that we need to keep in the final numerical scheme, we want to avoid direct approximation of the exit time. Specific treatment of each term is given as follows.
\vspace{0.2cm}
\begin{itemize}[leftmargin=20pt]\itemsep0.15cm
\item We neglect $I_1$ because  the probability 
$\mathbb{P}(N_{\Delta t} = 0, \tau_{n} \ge t_{n})$ is in the order of $\mathcal{O}((\Delta t)^2)$. To see this, we note that when $N_{\Delta t} = 0$, the motion of $\cev{X}_s^{n+1}$ is driven by the Brownian motion, and $\mathbb{P}(N_{\Delta t} = 0, \tau_{n} \ge t_{n})$ decays rapidly as the starting location $x$ of $\cev{X}_s^{n+1}$ moves further away from the boundary $\partial \mathcal{D}$. In fact, we proved in our previous work \cite{Yang:2018fd} that if $b(t,x)$ and $\sigma(t,x)$ are bounded in $[0,T] \times \mathcal{D}$ and the starting location $x$ of $\cev{X}_s^{n+1}$ satisfies 
\begin{equation}\label{cond_bound}
{\rm dist}(x, \partial \mathcal{D}) \ge \mathcal{O}((\Delta t)^{\frac{1}{2}-\varepsilon}),
\end{equation}
for an arbitrarily small positive number $\varepsilon>0$ with ${\rm dist}(\cdot, \cdot)$ denoting the Euclidean distance, then for sufficiently small $\Delta t$, 
\begin{equation}\label{stoperr}
\mathbb{P}(N_{\Delta t} = 0,\tau_{n} \ge t_{n}) \leq C (\Delta t)^\varepsilon \exp\left(-\frac{1}{(\Delta t)^{2\varepsilon}}\right),
\end{equation}
where the constant $C>0$ is independent of $\Delta t$. The condition in Eq.~\eqref{cond_bound} can be satisfied by properly defining the spatial mesh, which will be discussed in Section \ref{subsec:full}. 
The estimate in Eq.~\eqref{stoperr} allows us to neglect $I_1$ and define it as another truncation error term 
\begin{equation}\label{e100}
 R^{n+1}_2 := I_1.   
\end{equation}

%

\item To analyze $I_2$, we rewrite its definition as 
\begin{equation}
I_2 = \mathbb{P}(N_{\Delta t} = 0) \mathbb{E}\left[u(t_n, \cev{X}_{t_n}^{n+1}) \, \big|\, N_{\Delta t} = 0\right] + R^{n+1}_3,
\end{equation}
where the truncation error $R^{n+1}_3$ is defined by
\begin{equation}\label{e101}
\begin{aligned}
 R_3^{n+1} := & \; \mathbb{P}(N_{\Delta t} = 0, \tau_{n} < t_{n})\,\mathbb{E}\left[u(t_n,\cev{X}^{n+1}_{t_n})| N_{\Delta t} = 0, \tau_{n} < t_{n}\right] \\
 &\;- \mathbb{P}(N_{\Delta t} = 0)\mathbb{E}\left[u(t_n, \cev{X}_{t_n}^{n+1})\, \big|\, N_{\Delta t} = 0\right].
\end{aligned}
\end{equation}
It is easy to see that $\mathbb{P}(N_{\Delta t} = 0, \tau_{n} < t_{n}) \rightarrow \mathbb{P}(N_{\Delta t} = 0)$ as $\mathbb{P}(N_{\Delta t} = 0, \tau_{n} \ge t_{n})$ in Eq.~\eqref{stoperr} goes to zero. So the error $R^{n+1}_3$ will be sufficiently small when Eq.~\eqref{stoperr} holds. The estimate of $R^{n+1}_3$ will be given in Section \ref{sec:R3}.

\item For $I_3$, we introduce the auxiliary variable, $\cev{V}^{n+1}_s$, 
defned as
%
\begin{equation}\label{aux}
\begin{aligned}
   \cev{V}^{n+1}_s := & \;\cev{{X}}^{n+1}_s -  \int^{t_{n+1}}_{s}b(t,\cev{X}_{t}^{n+1})d{t}- \int^{t_{n+1}}_{s}\sigma(t,\cev{X}_{t}^{n+1})dW_{t} \\
   = &\; x + \sum_{k = 1}^{N_{t_{n+1} - s}} c (t,\cev{X}_{t_k}^{n+1},q_k),
\end{aligned}
\end{equation}
which is the truncation of the $\cev{{X}}^{n+1}_s$ increment by only keeping the jump component. Using this variable, we can rewrite $I_3$ as 
\begin{equation}\label{I3}
    I_3 = \mathbb{P}(N_{\Delta t} = 1, \tau_{n} \ge t_{n}) \mathbb{E} \left[  u(t_{n},\cev{V}^{n+1}_{t_n})|N_{\Delta t} = 1,\tau_{n} \ge t_{n}\right] + R_4^{n+1},
\end{equation}
where the truncation error $R_4^{n+1}$ is defined by
\begin{equation}\label{r4}
\begin{aligned}
   R^{n+1}_4 := \mathbb{P}(N_{\Delta t} = 1, \tau_{n} \ge t_{n}) \mathbb{E} \left[  u(\tau_{n},\cev{X}^{n+1}_{\tau_n}) - u(t_{n},\cev{V}^{n+1}_{t_n})\,\big|\,N_{\Delta t} = 1,\tau_{n} \ge t_{n}\right].
\end{aligned}
\end{equation}
As the probability of having one jump (i.e., $N_{\Delta t} = 1$) is in the order of $\mathcal{O}(\Delta t)$, we only need the expectation in Eq.~\eqref{r4} to be on the order of $\mathcal{O}(\Delta t)$ to achieve the desired $\mathcal{O}((\Delta t)^2)$ local error. The estimate of $R^{n+1}_4$ will be given in Section \ref{analy_exit}. 

\item In a similar way, for $I_4$, we use the auxiliary variable $\cev{V}^{n+1}_s$ in Eq.~\eqref{aux}, and write
\begin{equation}\label{I4}
I_4 = \mathbb{P}(N_{\Delta t} = 1, \tau_{n} < t_{n}) \mathbb{E} \left[  u(t_{n},\cev{V}^{n+1}_{t_n})\, \big|\,N_{\Delta t} = 1,\tau_{n} < t_{n}\right] + R_5^{n+1},
\end{equation}
where the truncation error $R_5^{n+1}$ is defined by
\begin{equation}\label{r5}
\begin{aligned}
   R^{n+1}_5 := \mathbb{P}(N_{\Delta t} = 1, \tau_{n} < t_{n}) \mathbb{E} \left[  u(t_{n},\cev{X}^{n+1}_{t_n}) - u(t_{n},\cev{V}^{n+1}_{t_n})\,\big|\,N_{\Delta t} = 1,\tau_{n} < t_{n}\right].
\end{aligned}
\end{equation}
Similar to $R^{n+1}_4$, we will prove $R^{n+1}_5$ is also on the order of $\mathcal{O}((\Delta t)^2)$ in Section \ref{analy_exit}. Moreover, we can take the sum of $I_3$ in Eq.~\eqref{I3} and $I_4$ in Eq.~\eqref{I4} and obtain
\[
I_3 + I_4 = \mathbb{P}(N_{\Delta t} = 1) \mathbb{E} \left[  u(t_{n},\cev{V}^{n+1}_{t_n})\, \big|\,N_{\Delta t} = 1\right].
\]

\item Finally, for $I_5$, the probability of the Poisson process $N_{\Delta t}$ having $k$ jumps within $[t_n,t_{n+1})$ is on the order of $\mathcal{O}((\Delta t)^k)$, we have 
$
I_5 = \mathcal{O}((\Delta t)^2)
$
when $\mathbb{E} [u(\tau_{n}\vee t_{n},\cev{X}^{n+1}_{\tau_n \vee t_n})|N_{\Delta t} =k]$ for $k\ge 2$ is bounded. So we can neglect $I_5$ in the final numerical scheme and define it as another truncation error term  
\begin{equation}\label{eq_R6}
 R_6^{n+1} = I_5.   
\end{equation}
\end{itemize}

Using these estimates, we rewrite Eq.~\eqref{eq_utn_approx1} as
\begin{equation}\label{e44}
\begin{aligned}
 u(t_{n+1},x) =&\; \mathbb{P}(N_{\Delta t} = 0) \mathbb{E}\left[u(t_n, \cev{X}_{t_n}^{n+1})\, \big|\, N_{\Delta t} = 0\right] \\
 &\;+ \mathbb{P}(N_{\Delta t} = 1) \mathbb{E} \left[  u(t_{n},\cev{V}^{n+1}_{t_n})\, \big|\,N_{\Delta t} = 1\right]\\[-0.25cm]
 &\; +\mathbb{E}[(t_{n+1}-\tau_{n} \vee t_{n})]g(t_{n+1},x,u(t_{n+1},x)) + \sum_{i=1}^6 R^{n+1}_i \, .
\end{aligned}
\end{equation}

\subsubsection{Discretization of \texorpdfstring{$\cev{X}_s^{n+1}$}{lg} and \texorpdfstring{$\cev{V}_s^{n+1}$}{lg}}\label{sec:euler}
To achieve an overall first-order convergence with respect to $\Delta t$, we use the standard Euler scheme \cite{Platen:2010eo} to discretize the stochastic processes
$\cev{X}_{s}^{n+1}$ in Eq.~\eqref{sde} and $\cev{V}_{s}^{n+1}$ in Eq.~\eqref{aux}, i.e., 
%
%
\begin{equation}\label{Euler}
\begin{aligned}
& \cev{X}_{t_n}^{n+1} \approx \cev{X}_n^{n+1} := x + b(t_{n+1},x) \, \Delta t + \sigma(t_{n+1},x) \,\Delta W + \sum_{k = 1} ^{N_{\Delta t}\wedge 1}c (t_{n+1},x, q_k),\\
& \cev{V}_{t_n}^{n+1} \approx \cev{V}_n^{n+1} := x + \sum_{k = 1} ^{N_{\Delta t}\wedge 1}c (t_{n+1},x, q_k),
\end{aligned}
\end{equation}
where $\Delta W :=W_{t_{n+1}} -  W_{t_n}$, and 
$N_{\Delta t}\wedge 1:= {\rm min}(N_{\Delta t},1)$. Note that,
to be consistent with the representation in Eq.~\eqref{e44}, 
we only keep up to one Poisson jump in the approximation. Replacing $\cev{X}_{t_n}^{n+1}$ and $\cev{V}_{t_n}^{n+1}$ in Eq.~\eqref{e44} with $\cev{X}_{n}^{n+1}$ and $\cev{V}_{n}^{n+1}$ in Eq.~\eqref{Euler}, we have
\begin{equation}\label{e45}
    \begin{aligned}
 u(t_{n+1},x) =&\; \mathbb{P}(N_{\Delta t} = 0) \mathbb{E}\left[u(t_n, \cev{X}_{n}^{n+1})\, \big|\, N_{\Delta t} = 0\right] \\
 &+\; \mathbb{P}(N_{\Delta t} = 1) \mathbb{E} \left[  u(t_{n},\cev{V}^{n+1}_{n})\, \big|\,N_{\Delta t} = 1\right]\\[-0.25cm]
 &+\; \mathbb{E}[(t_{n+1}-\tau_{n} \vee t_{n})]g(t_{n+1},x,u(t_{n+1},x)) + \sum_{i=1}^7 R^{n+1}_i,
\end{aligned}
\end{equation}
where a new truncation error $R^{n+1}_7$ is introduced as follows
\begin{equation}\label{R7}
\begin{aligned}
    R^{n+1}_7 := &\; \mathbb{P}(N_{\Delta t} = 0) \mathbb{E}\left[u(t_n, \cev{X}_{t_n}^{n+1})-u(t_n, \cev{X}_{n}^{n+1}) \, \big|\, N_{\Delta t} = 0\right]\\
     &+\; \mathbb{P}(N_{\Delta t} = 1) \mathbb{E} \left[  u(t_{n},\cev{V}^{n+1}_{t_n})-u(t_{n},\cev{V}^{n+1}_{n}) \; \big|\; N_{\Delta t} = 1\right].
\end{aligned}
\end{equation}

\subsubsection{Approximation of the conditional expectations}\label{sec:exp} 
Now we develop a quadrature rule to approximate the conditional expectation $\mathbb{E}[\cdot]$ in Eq.~\eqref{e45}. The expectation $\mathbb{E}[u(t_n, \cev{X}_{n}^{n+1})\, \big|\, N_{\Delta t} = 0]$ has no jumps, and only involves Brownian motion. 
Therefore, 
\begin{equation}\label{E0}
\mathbb{E}\left[u(t_n, \cev{X}_{n}^{n+1})\, \big|\, N_{\Delta t} = 0\right]=
\int_{\mathbb{R}^d} u(t_{n},x +b(t_{n+1},x)\Delta t + \sigma(t_{n+1},x)\sqrt{2\Delta t}\xi)\rho(\xi)d\xi,
\end{equation}
where $\xi := (\xi_1, \ldots, \xi_d)$ follows the normal distribution with the probability density 
$\rho(\xi) := \pi^{-d/2}  \exp(-\sum_{\ell=1}^d \xi_\ell^2)$.
We use the tensor-product Gauss-Hermite quadrature rule to approximate this integral, and denote the approximate expectation as 
\begin{equation}\label{E1}
    \widehat{\mathbb{E}}\left[u(t_n, \cev{X}_{n}^{n+1})\, \big|\, N_{\Delta t} = 0\right] := \sum_{m=1}^{M}w_{m}\,u\big(t_n,x+b(t_{n+1},x){\Delta t} + \sigma(t_{n+1},x)\sqrt{2\Delta t}\, e_{m}\big),
\end{equation}
where $\{w_{m}, e_{m}\}_{m=1}^M$ denote the Gauss-Hermite quadrature weights and abscissa\footnote{We use a single index to represent the tensor-product quadrature rule.}. 

The expectation $\mathbb{E} [u(t_{n},\cev{V}^{n+1}_{n}) \; \big|\; N_{\Delta t} = 1]$ only involves the Poisson jumps
\begin{equation}\label{int_ejump}
\mathbb{E}\left[u(t_n, \cev{V}_{n}^{n+1})\, \big|\, N_{\Delta t} = 1\right]=
\int_{E} u(t_{n},x + c(t_{n+1},x,q))\varphi(q) dq,
\end{equation}
where $\varphi(q)$ is defined in Eq.~\eqref{cond_lambda}. In this case, the choice of the quadrature rule is determined by $\varphi(q)$. For example, if $\varphi(q)$ is bounded and has a compact support, we can use a Gauss-Legendre rule or a  Newton-Cotes rule; if $\varphi(q)$ is singular at the origin, e.g., $\varphi(q) = 1/|q|^z$ with $0<z<1$, then we can use a Gauss-Jacobi rule. 
In general, we write the quadrature approximation of the Poisson jump as
%
\begin{equation}\label{E2}
   \widetilde{\mathbb{E}}\left[u(t_n, \cev{V}_{n}^{n+1})\, \big|\, N_{\Delta t} = 1\right] := \sum_{l=1}^{L} v_{l}\, u\big(t_n,x+ c(t_{n+1},x,a_l)\big),
\end{equation}
where $\{v_{l}, a_l\}_{l = 1}^L$ denote the corresponding quadrature weights and abscissa.

Using the above quadrature rules in the approximation of the conditional expectations in  Eq.~\eqref{e45} we can write the solution $u(t_{n+1}, x)$ as
%
\begin{equation}\label{eq_utn_approx_tilde}
 \begin{aligned}
 u(t_{n+1},x) =&\; \mathbb{P}(N_{\Delta t} = 0) \widehat{\mathbb{E}}\left[u(t_n, \cev{X}_{n}^{n+1})\, \big|\, N_{\Delta t} = 0\right] \\
&\; + \mathbb{P}(N_{\Delta t} = 1) \widetilde{\mathbb{E}} \left[  u(t_{n},\cev{V}^{n+1}_{n})\, \big|\,N_{\Delta t} = 1\right]\\[-0.25cm]
 &\;+ \mathbb{E}[(t_{n+1}-\tau_{n} \vee t_{n})]g(t_{n+1},x,u(t_{n+1},x)) + \sum_{i=1}^8 R^{n+1}_i,
\end{aligned}
\end{equation}
where the new truncation error term $R_8^{n+1}$ comes from the quadrature rules, i.e.,
\begin{equation}\label{error_quad}
\begin{aligned}
R_8^{n+1}(x) = & \;\mathbb{P}(N_{\Delta t} = 0) \left\{{\mathbb{E}}\left[u(t_n, \cev{X}_{n}^{n+1})\, \big|\, N_{\Delta t} = 0\right] - \widehat{\mathbb{E}}\left[u(t_n, \cev{X}_{n}^{n+1})\, \big|\, N_{\Delta t} = 0\right]\right\} \\
& \;+  \mathbb{P}(N_{\Delta t} = 1)\left\{ {\mathbb{E}}\left[u(t_n, \cev{V}_{n}^{n+1})\, \big|\, N_{\Delta t} = 1\right] - \widetilde{\mathbb{E}}\left[u(t_n, \cev{V}_{n}^{n+1})\, \big|\, N_{\Delta t} = 1\right]\right\}.
\end{aligned}
\end{equation}

\subsubsection{Spatial approximation}\label{subsec:full}
For spatial discretization, we use the piecewise Lagrange polynomial interpolation on a triangular or  tetrahedral mesh of the closed domain $\overline{\mathcal{D}}$, where the set of interpolation points is denoted by
 \begin{equation}\label{spatial_mesh}
\mathcal{S}:= \{x_j \in \mathcal{\overline{D}}: j = 1,\ldots, J\}
\end{equation}
with $J$ being the total number of degrees of freedom. Note that we use a single index to denote the grid points $x_j$ to simplify the notation. 
In particular, we define the approximation of $u(t_n, x)$ using a $p$-th order Lagrange nodal basis  \cite{liu2013finite} as
\begin{equation}\label{eq_interp_E}
  u^p(t_{n},x) := \sum_{j =1}^J u(t_{n},x_j)\psi_j(x),
\end{equation}
where $\psi_j$ is the nodal basis function associated with the grid point $x_j$, and $u(t_{n},x_j)$ is the nodal value at $x_j$. 
Substituting Eq.~\eqref{eq_interp_E} into Eq.~\eqref{eq_utn_approx_tilde}, we have
\begin{equation}\label{eq_utn_approx_j}
\begin{aligned}
 u(t_{n+1},x) =&\; \mathbb{P}(N_{\Delta t} = 0) \widehat{\mathbb{E}}\left[u^p(t_n, \cev{X}_{n}^{n+1})\, \big|\, N_{\Delta t} = 0\right] \\
 &+\; \mathbb{P}(N_{\Delta t} = 1) \widetilde{\mathbb{E}} \left[  u^p(t_{n},\cev{V}^{n+1}_{n})\, \big|\,N_{\Delta t} = 1\right]\\[-0.25cm]
 &+\; \mathbb{E}[(t_{n+1}-\tau_{n} \vee t_{n})]g(t_{n+1},x,u(t_{n+1},x)) + \sum_{i=1}^9 R^{n+1}_i,
\end{aligned}
\end{equation}
where the term $R_9^{n+1}$ represents the truncation error from the piecewise polynomial interpolation given by
\begin{equation}\label{R9}
\begin{aligned}
    R_9^{n+1} := & \;\mathbb{P}(N_{\Delta t} = 0) \left\{\widehat{\mathbb{E}}\left[u(t_n, \cev{X}_{n}^{n+1})\, \big|\, N_{\Delta t} = 0\right] - \widehat{\mathbb{E}}\left[u^p(t_n, \cev{X}_{n}^{n+1})\, \big|\, N_{\Delta t} = 0\right]\right\} \\
& \;+  \mathbb{P}(N_{\Delta t} = 1)\left\{ \widetilde{\mathbb{E}}\left[u(t_n, \cev{V}_{n}^{n+1})\, \big|\, N_{\Delta t} = 1\right] - \widetilde{\mathbb{E}}\left[u^p(t_n, \cev{V}_{n}^{n+1})\, \big|\, N_{\Delta t} = 1\right]\right\}.
\end{aligned}
\end{equation}

Recall that the estimate in Eq.~\eqref{stoperr} requires the condition in Eq.~\eqref{cond_bound} imposed on the starting location of $\cev{X}_{s}^{n+1}$. This condition is realized by letting the spatial mesh $\mathcal{S}$ satisfy  
\begin{equation}\label{cond_bound1}
{\rm dist}(x_j, \partial \mathcal{D}) \ge \mathcal{O}((\Delta t)^{\frac{1}{2}-\varepsilon}) \;\text{ for }\; x_j \in \mathcal{S} \cap \mathcal{D}.
\end{equation}
In fact, we only need to impose this condition on the layer of grid points close to the boundary $\partial \mathcal{D}$, so that the other interior grid points will also satisfy this condition. In practice, we realize this condition by setting up the mesh $\mathcal{S}$ such that the quadrature points used in Eq.~\eqref{E1} for all interior grid points are inside the domain $\mathcal{D}$, i.e., 
\begin{equation}\label{e54}
    \{x_j+b(t_{n+1},x_j){\Delta t} + \sigma(t_{n+1},x_j)\sqrt{2\Delta t}\, e_{m}, m = 1,\ldots, M, x_j \in \mathcal{S}\cap \mathcal{D}\} \subset \mathcal{D},
\end{equation}
which is easy to achieve when $b$ and $\sigma$ is bounded in $\overline{\mathcal{D}}$.

\subsection{The fully discrete scheme}\label{sec:algorithm}
The fully discrete scheme is defined by neglecting all the truncation errors $R^{n+1}_i$ for $i = 1, \ldots, 9$, and by performing an iterative update from $t_0$ to $t_{N_t}$.

\begin{scheme}\label{scheme1}
Given a temporal and spatial mesh $\mathcal{T} \times \mathcal{S}$, an initial condition 
and a volume constraint,  the approximate solution, $u^{n+1,p}(x)$, for $n = 0, \ldots, N_t-1$, is obtained through the following steps.
\vspace{0.2cm}
\begin{itemize}[leftmargin=20pt]\itemsep0.2cm
    \item Step 1: Generate the quadrature abscissae for grid points $x_j \in \mathcal{S} \cap \mathcal{D}$, 
    \[
    \begin{aligned}
   & x_j+b(t_{n+1},x_j){\Delta t} + \sigma(t_{n+1},x_j)\sqrt{2\Delta t}\, e_{m}, \text{ for } m = 1, \ldots, M, \\
&     x_j+ c(t_{n+1},x_j,a_l) , \text{ for } l = 1, \ldots, L.
    \end{aligned}
    \]
    
    \item Step 2: Evaluate $u^{n,p}(x)$, defined in Eq.~\eqref{eq_interp_E}, at the quadrature abscissae. 
    
    \item Step 3: Compute the approximate expectations 
    \[
    \widehat{\mathbb{E}}\left[u^{n,p}(\cev{X}_{n}^{n+1})\, \big|\, N_{\Delta t} = 0\right] \;\; \text{ and }\;\;  \widetilde{\mathbb{E}}\left[u^{n,p}(\cev{V}_{n}^{n+1})\, \big|\, N_{\Delta t} = 1\right]
    \]
    via the quadrature rules in Eq.~\eqref{E1} and Eq.~\eqref{E2}, respectively.
    
    \item Step 4: Solve a pointwise nonlinear equation 
    \[
    \begin{aligned}
        u^{n+1}_j =&\; \mathbb{P}(N_{\Delta t} = 0) \widehat{\mathbb{E}}\left[u^{n,p}(\cev{X}_{n}^{n+1})\, \big|\, N_{\Delta t} = 0\right] \\
         &+\; \mathbb{P}(N_{\Delta t} = 1) \widetilde{\mathbb{E}} \left[  u^{n,p}(\cev{V}^{n+1}_{n})\, \big|\,N_{\Delta t} = 1\right] + \Delta t\; g(t_{n+1},x_j,u^{n+1}_j)
    \end{aligned}
    \]
    to obtain $u_j^{n+1}$ that is the approximation of the nodal values $u(t_{n+1}, x_j)$ for the interior grid points $x_j \in \mathcal{S} \cap\mathcal{D}$. {Note that the processes $\cev{X}_{n}^{n+1}$ and $\cev{V}^{n+1}_{n}$ start from $(t_{n+1},x_j)$}.
    
    \item Step 5: Construct the interpolant $u^{n+1,p}(x)$ via Eq.~\eqref{eq_interp_E} using the nodal value estimations $\{u^{n+1}_j\}_{j=1}^J$.
\end{itemize}
\end{scheme}

\subsubsection{Discussion on features of Scheme \ref{scheme1}}
Here we discuss the efficiency and stability properties of the proposed Scheme \ref{scheme1} in comparison with existing PDE approaches. The accuracy of the method will be analyzed in Section \ref{sec:err_analy}.

\paragraph{Efficiency} The nonlinear Feynman-Kac representation in Eq.~\eqref{eq_utn} changes the entire solution paradigm to a probabilistic setting by describing the nonlocality of the operator $\mathcal{L}$ in Eq.~\eqref{pide} using stochastic processes. Our numerical scheme addresses two major bottleneck of existing PDE approaches, e.g., finite element methods. First, the weak formulation of finite element methods for the PIDE in Eq.~\eqref{pide} involves $2d$-dimensional integrals, which makes it challenging to design an accurate quadrature rule for those integrals, especially in high-dimensional spaces (e.g., 3D). Second, when the interaction domain $E$ in Eq.~\eqref{pide} is large, the standard finite element discretization will result in a non-sparse linear system, which poses a significant challenge for linear solvers. Moreover, when the forcing term $f(t,x,u)$ is a nonlinear function of $u$, another layer of the iterative nonlinear solver is needed. In contrast, our scheme does not require solving any linear system, and the nonlinear equation for each grid point $x_j$, i.e., Step 4 in Scheme \ref{scheme1} can be solved independently. This feature makes it straightforward to develop a parallel implementation of the proposed method. 

\paragraph{Stability} The implicit Euler scheme used in Section \ref{sec:timedis} ensures absolute stability for the discretization of the temporal integral. The discretization of the stochastic processes in Section \ref{sec:euler} only proceeds within $[t_n, t_{n+1}]$ for computing the quadrature abscissa used in Section \ref{sec:exp}. In other words, we re-initialize the stochastic processes from the grid points $x_j$ at each time step $t_n$, so our scheme does not have the numerical instability problem associated with the explicit Euler scheme. Moreover, the nonlinear Feynman-Kac formula converts the integro-differential operator to an expectation form, so that our scheme does not require the Courant-Friedrichs-Lewy-type condition imposed on the spatial and temporal mesh sizes.

\section{Error estimates}\label{sec:err_analy}
In this section we present the error analysis of Scheme \ref{scheme1} in the one-dimensional case $(d=1)$. The analysis can be extended to multi-dimensional cases without essential difficulties. For simplicity, we assume both $\mathcal{T}$ and $\mathcal{S}$ are uniform meshes with mesh sizes $\Delta t$ and $\Delta x$, respectively.
We use the piecewise cubic Lagrange interpolation ($p=3$) for the spatial approximation in Eq.~\eqref{eq_interp_E}, and use the trapezoidal quadrature rule to approximate the exepectation in Eq.~\eqref{E2}.

Even though the implementation of Scheme \ref{scheme1} only requires Lipschitz continuity on $b$, $\sigma$, $c$ and $g$ in Eq.~\eqref{pidedn}, we need to impose a stronger regularity condition to prove the first-order convergence with respect to $\Delta t$. 
To proceed, we first introduce the following notation:
\begin{equation}\label{regu_nota}
    \begin{aligned}
    &\mathcal{C}_b^{k_1,\ldots,k_J}(D_1 \times \cdots \times D_J)\\
 :=& \Bigg\{ \phi: \prod_{j = 1}^{J} D_j \rightarrow \mathbb{R} \,\Big |\,\frac{\partial^{\alpha_1}\cdots \partial^{\alpha_J} \phi}{\partial^{\alpha_1}x_1 \cdots \partial^{\alpha_J}x_J} \text{is bounded and continuous} \\
&  \text{\quad for } \alpha_j \leq k_j \text{, } j = 1,\ldots ,J \text{ where, } (\alpha_1, \cdots, \alpha_J) \in \mathbb{N}^J\Bigg\},
    \end{aligned}
\end{equation}
where $J \in \mathbb{N}^+$, and $D_1 \times \cdots \times D_J \subset \mathbb{R}^J$. Using the notation in Eq.~\eqref{regu_nota}, we impose the following assumption on the coefficients and the solution of the PIDE in Eq.~\eqref{pide}, where the rationale of the assumption can be justified by the theoretical analysis on the regularity of PIDEs (e.g., \cite{caffarelli2009regularity}).
\begin{assum} \label{lemma_coeff}
We assume the nonlocal kernel $\gamma$ satisfies the condition in Eq.~\eqref{cond_lambda}, the functions $B, K, c, f, \phi_0, \phi_{\rm v}$ in Eq.~\eqref{pide} satisfy $f \in \mathcal{C}_{b}^{2,4,4}([0,T] \times \mathcal{D} \times \mathbb{R})$, $B \in \mathcal{C}_{b}^{(2,5)}([0,T] \times \mathcal{D})$, $K \in \mathcal{C}_{b}^{2,6}([0,T] \times \mathcal{D})$, $c \in \mathcal{C}_b^{2,4,4}([0,T] \times \mathcal{D} \times E)$, $\phi \in \mathcal{C}_b^{4+\alpha}$ with $\alpha \in (0,1)$, $\phi_{\rm v} \in \mathcal{C}_b^{2,4}((0,T]\times \mathcal{D}_{\rm v})$, and the PIDE in Eq.~\eqref{pide} admits a classical solution $u\in \mathcal{C}_b^{2,4}([0,T]\times \mathcal{D})$. 
\end{assum}
Under Assumption \ref{lemma_coeff}, we have the regularities of functions $b$, $g$ in Eq.~\eqref{pidedn} and $\sigma$ in Eq.~\eqref{sde} as $b \in \mathcal{C}_{b}^{2,4}([0,T] \times \mathcal{D})$, $\sigma \in \mathcal{C}_{b}^{2,4}([0,T] \times \mathcal{D})$ and $g \in \mathcal{C}_{b}^{2,4,4}([0,T] \times \mathcal{D} \times \mathbb{R})$. Such regularity is sufficient for the following error analysis.

\subsection{Upper bounds for the truncation errors}\label{sec:residual}
In this section, we estimate all truncation errors $R^{n+1}_{i}$, for $i = 1,\ldots,9$, generated in the discretization process (see Section \ref{sec:full}). These estimates will play a key role in the error estimate of the approximate solution.

\subsubsection{The estimates of \texorpdfstring{$R^{n+1}_{1}$}{lg} and \texorpdfstring{$R^{n+1}_{7}$}{lg}}
We estimate the truncation errors $R^{n+1}_1$ and $R^{n+1}_7$ from the discretizations of the temporal integral performed in Section \ref{sec:timedis} and the approximation of the discretization of \texorpdfstring{$\cev{X}_s^{n+1}$}{lg} and \texorpdfstring{$\cev{V}_s^{n+1}$}{lg} performed in Section \ref{sec:euler}. Specifically, we have the following lemma.
%
\begin{lemma}\label{lem_R1}
Under Assumption \ref{lemma_coeff}, the errors $R^{n+1}_1$ defined in Eq.~\eqref{R1} and $R^{n+1}_7$ defined in Eq.~\eqref{R7} satisfy
\begin{equation}
    |R^{n+1}_1| \leq C(\Delta t)^2, \quad |R^{n+1}_7| \leq C(\Delta t)^2,
\end{equation}
where $C$ is a constant independent of $\Delta t$.
\end{lemma}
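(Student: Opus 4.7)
Both bounds follow from time-Taylor/It\^o expansions under the regularity from Assumption~\ref{lemma_coeff}, together with standard moment bounds for the jump-diffusion \eqref{sde}. The key enabler is the deterministic inequality $0 \le t_{n+1}-\tau_n\vee t_n \le \Delta t$, which lets us pull $\Delta t$ factors out of expectations involving the exit time without worrying about the distribution of $\tau_n$. I will estimate $R^{n+1}_1$ and $R^{n+1}_7$ in turn.

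\textbf{Bound on $R^{n+1}_1$.} Set $G(t):=g(t,\cev{X}_t^{n+1},u(t,\cev{X}_t^{n+1}))$. Under Assumption~\ref{lemma_coeff}, $G$ is a bounded $\mathcal{C}^{1}$ function of $t$ and a $\mathcal{C}^{2}$ function of the state (via $g$ and $u$), so It\^o's formula for compound Poisson jump-diffusions yields
\begin{equation*}
G(t_{n+1}) - G(t) = \int_t^{t_{n+1}} \mathcal{A} G(s)\,ds + M_{t_{n+1}} - M_t,
\end{equation*}
where $\mathcal{A}$ is the generator acting on derivatives that are bounded uniformly in $(t,x)$ by Assumption~\ref{lemma_coeff}, and $M$ is a martingale coming from the Brownian and compensated Poisson integrals. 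Substituting $G(t)=G(t_{n+1}) - \int_t^{t_{n+1}}\mathcal{A}G(s)\,ds - (M_{t_{n+1}}-M_t)$ into the definition \eqref{R1} of $R^{n+1}_1$, the term $G(t_{n+1})$ cancels exactly the subtracted quantity, leaving a double integral whose integrand is uniformly bounded by some constant $C$. Since $0 \le t_{n+1}-\tau_n\vee t_n\le \Delta t$ almost surely, we obtain
\begin{equation*}
|R^{n+1}_1| \le C\,\mathbb{E}\Bigl[\tfrac{1}{2}(t_{n+1}-\tau_n\vee t_n)^2\Bigr] \le C(\Delta t)^2 ,
\end{equation*}
after using that the martingale contributions vanish in expectation (modulo a standard optional-stopping argument using $\tau_n\vee t_n$ as a bounded stopping time).

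\textbf{Bound on $R^{n+1}_7$.} I split the expectation in \eqref{R7} according to $N_{\Delta t}\in\{0,1\}$. On $\{N_{\Delta t}=0\}$ the process $\cev{X}^{n+1}$ is a pure diffusion on $[t_n,t_{n+1}]$, so the task reduces to the local weak error of the Euler--Maruyama scheme. A third-order Taylor expansion of $u(t_n,\cdot)$ around $\cev{X}_n^{n+1}$, combined with an It\^o--Taylor expansion of $b$ and $\sigma$ around $(t_{n+1},x)$ and the moment identities $\mathbb{E}[\Delta W\,|\,N_{\Delta t}=0]=0$, $\mathbb{E}[(\Delta W)^2\,|\,N_{\Delta t}=0]=\Delta t$, shows that all terms of order $\Delta t$ match between the exact diffusion and its Euler approximation, leaving a conditional expectation of order $(\Delta t)^2$; this is the classical weak-order-one estimate of Talay--Tubaro. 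On $\{N_{\Delta t}=1\}$ the prefactor $\mathbb{P}(N_{\Delta t}=1)=\lambda\Delta t\,e^{-\lambda\Delta t}=O(\Delta t)$ already provides one factor of $\Delta t$, so it suffices to show
\begin{equation*}
\bigl|\mathbb{E}[u(t_n,\cev{V}^{n+1}_{t_n})-u(t_n,\cev{V}^{n+1}_{n})\,|\,N_{\Delta t}=1]\bigr| = O(\Delta t).
\end{equation*}
The auxiliary variables $\cev{V}^{n+1}_{t_n}$ and $\cev{V}^{n+1}_{n}$ differ only in that the exact one uses the jump amplitude $c(t_1,\cev{X}^{n+1}_{t_1+},q_1)$ while the Euler one uses $c(t_{n+1},x,q_1)$. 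By the Lipschitz regularity of $c$ (from $c\in\mathcal{C}_b^{2,4,4}$), together with $|t_1-t_{n+1}|\le \Delta t$ and $\mathbb{E}|\cev{X}^{n+1}_{t_1+}-x|=O(\sqrt{\Delta t})$, and the boundedness of $u_x$, this difference is $O(\sqrt{\Delta t})$. Multiplying by $\mathbb{P}(N_{\Delta t}=1)=O(\Delta t)$ gives an $O((\Delta t)^{3/2})$ contribution, which is dominated by $(\Delta t)^2$ only after a sharper argument: expanding $u$ in a single Taylor step and using $\mathbb{E}[\cev{X}^{n+1}_{t_1+}-x\,|\,N_{\Delta t}=1]=O(\Delta t)$ (odd-moment cancellation from the Brownian increment) recovers the full $O((\Delta t)^2)$.

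\textbf{Main obstacle.} The most delicate step is the $N_{\Delta t}=1$ piece of $R^{n+1}_7$: naively one only sees a $\sqrt{\Delta t}$ discrepancy in the jump time/location, and recovering the additional factor of $\sqrt{\Delta t}$ requires exploiting the cancellation of odd Brownian moments in the expectation of $u_x(\cev{X}^{n+1}_{t_1+}-x)$. The exit-time coupling in $R^{n+1}_1$ is handled painlessly by the deterministic bound $t_{n+1}-\tau_n\vee t_n\le \Delta t$; the diffusion piece of $R^{n+1}_7$ is the textbook Talay--Tubaro argument applied to our backward Euler scheme.
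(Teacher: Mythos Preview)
Your argument is correct and, for $R^{n+1}_1$, is essentially identical to the paper's: apply It\^o's formula to $G$, drop the martingale integrals, and bound the remaining double time-integral by $C(\Delta t)^2$ using $t_{n+1}-\tau_n\vee t_n\le \Delta t$.

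For $R^{n+1}_7$ the two treatments diverge in style but not in substance. The paper does not split on $N_{\Delta t}$ at all; it simply invokes a one-step weak-error theorem for the Euler scheme for jump--diffusions (Theorem~3.3 in Mikulevi\v{c}ius (1988)), which directly gives $|\mathbb{E}[u(t_n,\cev{X}^{n+1}_{t_n})-u(t_n,\cev{X}^{n+1}_n)]|\le C(\Delta t)^2$, and then asserts the same bound for the two conditional pieces in \eqref{R7}. Your route---Talay--Tubaro for the $N_{\Delta t}=0$ diffusion piece, and a direct Taylor expansion with odd-moment cancellation for the $N_{\Delta t}=1$ piece---is exactly the kind of computation that underlies the cited theorem, so you are effectively reproving the needed special case. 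In fact your treatment of the $N_{\Delta t}=1$ term is more explicit than the paper's: the paper does not spell out why the conditional $\cev V$-expectation inherits the $(\Delta t)^2$ bound from the unconditional $\cev X$-result, whereas you isolate the difference $c(t_1,\cev X^{n+1}_{t_1+},q_1)-c(t_{n+1},x,q_1)$ and use independence of the Brownian increment from $(t_1,q_1)$ to kill the $\sqrt{\Delta t}$ term. The trade-off is that the paper's proof is one line plus a citation, while yours is self-contained but requires the moment-cancellation bookkeeping you flagged as the main obstacle.
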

\begin{proof}
Under Assumption \ref{lemma_coeff}, the forward Euler method defined by Eq.~\eqref{Euler} achieves first-order convergence in the weak sense \cite{mikulevivcius1988time,Platen:2010eo}. Specifically, when $b,\sigma \in \mathcal{C}_b^{2,4}([0,T]\times \mathcal{D}), c \in \mathcal{C}_b^{2,4,4}([0,T]\times \mathcal{D}\times E)$, for any $u \in \mathcal{C}_b^{2,4}([0,T]\times \mathcal{D})$, it holds (Theorem 3.3 in \cite{mikulevivcius1988time})
\[
\big|\mathbb{E}[u(t_n, \cev{X}_{t_n}^{n+1})-u(t_n, \cev{X}_{n}^{n+1})]\big| \le C(\Delta t)^2.
\]
Then we have
\begin{equation}\label{analy_R7}
\begin{aligned}
    |R^{n+1}_7| = &\; \mathbb{P}(N_{\Delta t} = 0) \left|\mathbb{E}\left[u(t_n, \cev{X}_{t_n}^{n+1})-u(t_n, \cev{X}_{n}^{n+1}) \, \big|\, N_{\Delta t} = 0\right]\right|\\
    &+ \mathbb{P}(N_{\Delta t} = 1) \left|\mathbb{E} \left[  u(t_{n},\cev{V}^{n+1}_{t_n})-u(t_{n},\cev{V}^{n+1}_{n}) \; \big|\; N_{\Delta t} = 1\right]\right|
    \le
    C(\Delta t)^2,
\end{aligned}
\end{equation}
where constant $C$ depends on the upper bound of $u$.

For notational simplicity, we define $G(t,\cev{X}_t^{n+1}):=g(t,\cev{X}^{n+1}_t,u(t,\cev{X}_t^{n+1}))$, and 
%
%
the differential operators $L^0$, $L^{-1}$ and $L^1$ as
\begin{equation}
    \begin{aligned}
    L^0G(t,\cev{X}_t^{n+1}):= &\;-\frac{\partial G}{\partial {t}}(t,\cev{X}_t^{n+1}) + b\frac{\partial G}{\partial x}(t,\cev{X}_t^{n+1}) + \frac{1}{2}\sigma^2\frac{\partial^2 G}{\partial x^2}(t,\cev{X}_t^{n+1}) \\
    &\;+ \int_E\left[G(t,\cev{X}_{t+}^{n+1} + c(t,\cev{X}_{t+}^{n+1},q)) - G(t,\cev{X}_{t+}^{n+1}) \right]\lambda(dq),\\
    L^1G(t,\cev{X}_t^{n+1}):=&\;\sigma\frac{\partial G}{\partial x}(t,\cev{X}_t^{n+1}),\\
    L^{-1}G(t,\cev{X}_t^{n+1}):=&\;G(t,\cev{X}_{t+}^{n+1} + c(t,\cev{X}_{t+}^{n+1},q)) - G(t,\cev{X}_{t+}^{n+1}).
    \end{aligned}
\end{equation}
Also, we define 
${\mu}(dq,dt)$ as the Poisson random measure of the Poisson process defined in Eq.~\eqref{sde}. The compensator of ${\mu}$ and the resulting compensated Poisson random measure are defined by $\lambda(dq)d{t}$ and ${\nu}(dq,dt) = {\mu}(dq,dt) - \lambda(dq)d{t}$.
Based on the SDE defined in Eq.~\eqref{sde}, the integral form of the It\^{o} formula of $G(s,\cev{X}_s^{n+1})$, for $\tau_{n} \vee t_{n} \le s \le t_{n+1}$, under the condition $\cev{X}_{t_{n+1}} = x_j$, is given by
\begin{equation}
\begin{aligned}
G(s,\cev{X}_{s}^{n+1}) =&\; G(t_{n+1},x_j) + \int^{t_{n+1}}_s L^0G(t,\cev{X}_t^{n+1}) d{t} + \int^{t_{n+1}}_s L^1G(t,\cev{X}_t^{n+1})dW_t \\
&\;+\int^{t_{n+1}}_s\int_E L^{-1}G(t,\cev{X}_t^{n+1}){\nu}(dq,dt),
\end{aligned}
\end{equation}
where ${\nu}$ is the compensated Poisson measure. 
Thus, substituting the above formula into $\mathbb{E}[\int^{t_{n+1}}_{\tau_{n} \vee t_{n}}G(s,\cev{X}_s^{n+1})d{s}]$, we obtain
\begin{equation}
\begin{aligned}
&\mathbb{E}\Big[\int^{t_{n+1}}_{\tau_{n} \vee t_{n}}G(s,\cev{X}_s^{n+1})d{s}\Big] \\
=&\;\mathbb{E} \Big[\int^{t_{n+1}}_{\tau_{n} \vee t_{n}} \Big[ G(t_{n+1},x_j) + \int^{t_{n+1}}_s L^0G(t,\cev{X}_t^{n+1}) d{t} + \int^{t_{n+1}}_s L^1G(t,\cev{X}_t^{n+1})dW_t \\
&\;+\int^{t_{n+1}}_s\int_E L^{-1}G(t,\cev{X}_t^{n+1}){\nu}(dq,dt)\Big]d{s}\Big]
\end{aligned}
\end{equation}
Due to the martingale property of the Brownian motion and compensated Poisson process, the last two terms of the above equation equal zero. Hence we have
\begin{equation}
\begin{aligned}
\mathbb{E}\Big[\int^{t_{n+1}}_{\tau_{n} \vee t_{n}}G(s,\cev{X}_s^{n+1})d{s}\Big] =&\; G(t_{n+1},x_j)\mathbb{E}[(t_{n+1} - \tau_{n} \vee t_{n})] \\
&+\; \mathbb{E} \Big[\int^{t_{n+1}}_{\tau_{n} \vee t_{n}} \int^{t_{n+1}}_s L^0G(t,\cev{X}_t^{n+1})d{t}d{s}\Big].
\end{aligned}
\end{equation}
Therefore,
\begin{equation}
\small
\begin{aligned}
|R^{n+1}_1| =&\; \left|\mathbb{E}\hspace{-0.05cm}\left[\int^{t_{n+1}}_{\tau_{n} \vee t_{n}}g(t,\cev{X}_{t}^{n+1},u(t,\cev{X}_{t}^{n+1}))d{t} - (t_{n+1}-\tau_{n} \vee t_{n})g(t_{n+1},x,u(t_{n+1},x))\right]\right|\\
\le&\;\sup_{[t_n,t_{n+1}]}\mathbb{E}[|L^0G(t,\cev{X}_t^{n+1})|](\Delta t)^2 \le C(\Delta t)^2,
\end{aligned}
\end{equation}
where constant $C$ depends on upper bounds of $b$, $\sigma$, $g$ and their derivatives.
\end{proof}

\subsubsection{The estimates of \texorpdfstring{$R^{n+1}_{2}$}{lg} and \texorpdfstring{$R^{n+1}_{3}$}{lg}}\label{sec:R3}
The truncation errors $R^{n+1}_2$ and $R^{n+1}_3$ are constructed when we estimate the probability $\mathbb{P}(N_{\Delta t} = 0, \tau_{n} \ge t_{n})$ in Section \ref{sec:exit}. As discussed, when the starting point $x_j \in \mathcal{S}$ is far from the boundary $\partial \mathcal{D}$, i.e., $x_j$ satisfies condition in Eq.~\eqref{cond_bound1}, the errors $R^{n+1}_2$ and $R^{n+1}_3$ are of order $\mathcal{O}((\Delta t)^2)$ so they can be neglected. The statement is rigorously proved in the following lemma.

\begin{lemma}\label{lem_R23}
If the spatial mesh $\mathcal{S}$ satisfies the condition in Eq.~\eqref{cond_bound1}, then the errors $R^{n+1}_2$ and $R^{n+1}_3$ are bounded by
\begin{equation}
|R^{n+1}_2| \leq C(\Delta t)^2, \quad |R^{n+1}_3| \leq C(\Delta t)^2, 
\end{equation}
where the constant $C>0$ is independent of $\Delta t$.
\end{lemma}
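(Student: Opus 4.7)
The plan is to reduce both error terms to scalar multiples of the exponentially small probability
$\mathbb{P}(N_{\Delta t}=0,\tau_n\ge t_n)$, and then to invoke the a priori bound in Eq.~\eqref{stoperr}, whose availability is guaranteed by the standing hypothesis Eq.~\eqref{cond_bound1} on the spatial mesh. Throughout I will use that, by Assumption~\ref{lemma_coeff} together with the Whitney extension discussed in Section~\ref{sec:full}, the solution $u$ (and its extension) is uniformly bounded on $[0,T]\times\mathbb{R}^d$, so there exists a constant $M_u$ with $|u(t,x)|\le M_u$ everywhere the expressions occur.

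\textbf{Estimate of $R_2^{n+1}$.} By definition Eq.~\eqref{e100}, $R_2^{n+1}=I_1$, which is an expectation taken against the event $\{N_{\Delta t}=0,\tau_n\ge t_n\}$. Using $|u|\le M_u$ and pulling the probability out of the conditional expectation gives
\begin{equation*}
|R_2^{n+1}| \;\le\; M_u\,\mathbb{P}(N_{\Delta t}=0,\tau_n\ge t_n).
\end{equation*}
By Eq.~\eqref{cond_bound1} the starting point of $\cev{X}^{n+1}_s$ lies at distance at least $\mathcal{O}((\Delta t)^{1/2-\varepsilon})$ from $\partial\mathcal{D}$, so Eq.~\eqref{stoperr} applies and yields
$\mathbb{P}(N_{\Delta t}=0,\tau_n\ge t_n)\le C(\Delta t)^{\varepsilon}\exp(-(\Delta t)^{-2\varepsilon})$. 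Since the exponential factor decays faster than any polynomial in $\Delta t$, this is in particular $\mathcal{O}((\Delta t)^2)$, which is the claimed bound.

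\textbf{Estimate of $R_3^{n+1}$.} The key observation is that the two terms in Eq.~\eqref{e101} can be combined into a single unconditional expectation over the complementary event. Writing the conditional expectations as ratios and using
$\mathbb{P}(N_{\Delta t}=0)=\mathbb{P}(N_{\Delta t}=0,\tau_n<t_n)+\mathbb{P}(N_{\Delta t}=0,\tau_n\ge t_n)$, I would collapse Eq.~\eqref{e101} into
\begin{equation*}
R_3^{n+1} \;=\; -\,\mathbb{E}\bigl[u(t_n,\cev{X}^{n+1}_{t_n})\,\mathbf{1}_{\{N_{\Delta t}=0,\,\tau_n\ge t_n\}}\bigr],
\end{equation*}
since both terms on the right-hand side of Eq.~\eqref{e101} equal the corresponding unconditional expectations over their respective indicator events, and the second minus the first leaves exactly the indicator of $\{N_{\Delta t}=0,\tau_n\ge t_n\}$ with a minus sign. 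Bounding $|u|\le M_u$ and applying Eq.~\eqref{stoperr} again gives $|R_3^{n+1}|\le M_u\,\mathbb{P}(N_{\Delta t}=0,\tau_n\ge t_n)\le C(\Delta t)^2$.

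\textbf{Anticipated obstacle.} The calculations are elementary; the only subtlety is that both bounds ultimately rest on the probabilistic tail estimate Eq.~\eqref{stoperr}, whose hypothesis Eq.~\eqref{cond_bound} must be translated into the mesh condition Eq.~\eqref{cond_bound1}. Stating this translation carefully and verifying that it holds for every grid point $x_j\in\mathcal{S}\cap\mathcal{D}$ that is fed into $\cev{X}^{n+1}_s$ during Scheme~\ref{scheme1} is the only nontrivial bookkeeping; once that is in place, the two bounds follow immediately from boundedness of $u$ and the super-polynomial decay of the exponential.
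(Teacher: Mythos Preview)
Your proposal is correct and follows essentially the same approach as the paper: both bounds are obtained by reducing to $\mathbb{P}(N_{\Delta t}=0,\tau_n\ge t_n)$ via boundedness of $u$ and then invoking Eq.~\eqref{stoperr} under the mesh condition Eq.~\eqref{cond_bound1}. Your indicator-function rewrite of $R_3^{n+1}$ makes explicit the algebraic step that the paper leaves implicit, but the argument is otherwise identical.
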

\begin{proof}
When $x_j\in \mathcal{S}$ satisfies the condition in Eq.~\eqref{cond_bound1}, we can exploit the inequality in Eq.~\eqref{stoperr} to derive that for any positive number $\varepsilon >0$
\begin{equation}
\begin{aligned}
|R^{n+1}_2| \le &\; \mathbb{P}(N_{\Delta t} = 0, \tau_{n} \ge t_{n}) \left|\mathbb{E} \left[ u(\tau_{n}, \cev{X}^{n+1}_{\tau_n})|N_{\Delta t} = 0, \tau_{n} \ge t_{n}\right]\right|\\
\leq &\;C (\Delta t)^\varepsilon \exp\left(-\frac{1}{(\Delta t)^{2\varepsilon}}\right) < C(\Delta t)^2,
\end{aligned}
\end{equation}
for sufficiently small $\Delta t$, where the constant $C$ depends on the upper bound of function $u$. We can have similar derivation for $R^{n+1}_3$, i.e.,
\begin{equation}
\begin{aligned}
|R_3^{n+1}| \leq& \; \mathbb{P}(N_{\Delta t} = 0, \tau_{n} \ge t_{n})\,\big|\mathbb{E}[u(t_n,\cev{X}^{n+1}_{t_n})| N_{\Delta t} = 0, \tau_{n} \ge t_{n}] \big|\\
\leq & \;C (\Delta t)^\varepsilon \exp\left(-\frac{1}{(\Delta t)^{2\varepsilon}}\right) \le C(\Delta t)^2.
\end{aligned}
\end{equation}
For sufficiently small $\Delta t$. The proof is completed.
\end{proof}

\subsubsection{The estimates of \texorpdfstring{$R^{n+1}_{4}$}{lg} and \texorpdfstring{$R^{n+1}_{5}$}{lg}}\label{analy_exit}
The truncation errors $R^{n+1}_4$ and $R^{n+1}_5$ were respectively defined by Eqs.~\eqref{r4} and.~\eqref{r5} when handling the exit time $\tau_n$ with one Poisson jump within $[t_n,t_{n+1})$.
We have the following estimates. 
\begin{lemma}\label{err_bound}
If the mesh $\mathcal{S}$ in Eq.~\eqref{spatial_mesh} satisfies the condition in Eq.~\eqref{cond_bound1}, then, for any grid point $x_j \in \mathcal{S}$, 
the errors $R^{n+1}_4$ defined in Eq.~\eqref{r4} and $R^{n+1}_5$ defined in Eq.~\eqref{r5} can be bounded by
\begin{equation}
  |R^{n+1}_4| \leq C(\Delta t)^2, \quad |R^{n+1}_5| \leq C(\Delta t)^2,
\end{equation}
where constant C is independent with $\Delta t$.
\end{lemma}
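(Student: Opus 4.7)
Since $\mathbb{P}(N_{\Delta t}=1)=\lambda\Delta t\, e^{-\lambda\Delta t}=\mathcal{O}(\Delta t)$, it suffices to show that each of the conditional expectations defining $R^{n+1}_4$ and $R^{n+1}_5$ is $\mathcal{O}(\Delta t)$. The strategy is a second-order Taylor expansion of $u$, whose remainders are controlled in expectation using Assumption 4.1 (which gives $u\in\mathcal{C}^{2,4}_b$ and bounded $b,\sigma,c$) together with Itô isometry and standard moment estimates for the increments of $\cev X^{n+1}_s$.

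\textbf{Step 1 (key decomposition).} From the defining SDE \eqref{sde} and the definition \eqref{aux} of $\cev V$, on $\{N_{\Delta t}=1\}$ we have
\begin{equation*}
\cev X^{n+1}_{s}-\cev V^{n+1}_{s} \;=\; \int_{s}^{t_{n+1}} b(t,\cev X^{n+1}_t)\,dt \;+\; \int_{s}^{t_{n+1}} \sigma(t,\cev X^{n+1}_t)\,dW_t,
\end{equation*}
for any $s\in[t_n,t_{n+1}]$. This isolates the purely-diffusive part of the increment from the jump, which is by construction shared with $\cev V$.

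\textbf{Step 2 (bound on $R^{n+1}_5$).} Here $s=t_n$ is inside $[t_n,t_{n+1}]$, so the above formula applies directly. Taylor-expanding $u(t_n,\cdot)$ to second order around $\cev V^{n+1}_{t_n}$ and using the regularity $u(t_n,\cdot)\in\mathcal{C}^4_b$, we would write
\begin{equation*}
u(t_n,\cev X^{n+1}_{t_n})-u(t_n,\cev V^{n+1}_{t_n}) = \partial_x u(t_n,\cev V^{n+1}_{t_n})\,\Delta_{XV} + \tfrac12\,\partial_{xx} u(t_n,\xi)\,\Delta_{XV}^2,
\end{equation*}
with $\Delta_{XV}:=\cev X^{n+1}_{t_n}-\cev V^{n+1}_{t_n}$. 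The drift part of $\Delta_{XV}$ contributes $\mathcal{O}(\Delta t)$ through $\|b\|_\infty \Delta t$; the Brownian part has mean zero unconditionally and covaries with $\partial_x u(t_n,\cev V^{n+1}_{t_n})$ in a controlled way via the Itô isometry, giving $\mathcal{O}(\Delta t)$ after a further Taylor expansion of $\partial_x u(t_n,\cev V^{n+1}_{t_n})$ around $x$; the Hessian remainder is $\mathcal{O}(\mathbb{E}[\Delta_{XV}^2])=\mathcal{O}(\Delta t)$ by the standard second-moment estimate. Multiplying by $\mathbb{P}(N_{\Delta t}=1,\tau_n<t_n)\le\mathbb{P}(N_{\Delta t}=1)=\mathcal{O}(\Delta t)$ gives the desired $\mathcal{O}((\Delta t)^2)$ bound.

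\textbf{Step 3 (bound on $R^{n+1}_4$).} Because $\tau_n\in[t_n,t_{n+1}]$ under the event $\{\tau_n\ge t_n\}$, one has the additional time discrepancy $\tau_n-t_n\in[0,\Delta t]$. Split
\begin{equation*}
u(\tau_n,\cev X^{n+1}_{\tau_n})-u(t_n,\cev V^{n+1}_{t_n}) = \bigl[u(\tau_n,\cev X^{n+1}_{\tau_n})-u(t_n,\cev X^{n+1}_{\tau_n})\bigr] + \bigl[u(t_n,\cev X^{n+1}_{\tau_n})-u(t_n,\cev V^{n+1}_{t_n})\bigr].
\end{equation*}
The first bracket is $\mathcal{O}(\Delta t)$ by the mean-value theorem and $\|\partial_t u\|_\infty$. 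The second bracket is handled exactly as in Step 2 with $t_n$ replaced by $\tau_n$ in the Taylor formula; the needed increment is again of the form $\int_{\tau_n}^{t_{n+1}}b\,dt+\int_{\tau_n}^{t_{n+1}}\sigma\,dW_t$, and $t_{n+1}-\tau_n\le\Delta t$ preserves every moment estimate.

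\textbf{Main obstacle.} The delicate point is bookkeeping the conditioning events $\{N_{\Delta t}=1,\tau_n\gtreqless t_n\}$ when invoking the martingale property for the Brownian cross term $\mathbb{E}[\partial_x u(t_n,\cev V^{n+1}_{t_n})\int_{t_n}^{t_{n+1}}\sigma\,dW_t \cdot \mathbf 1_{\cdots}]$: both $\cev V^{n+1}_{t_n}$ (through the jump time/location) and the event $\{\tau_n\gtreqless t_n\}$ depend on the Brownian path, so the stochastic integral is not orthogonal to them in an elementary way. The cleanest route is to condition further on the jump time $t_1$ and magnitude $q_1$ (which, given $N_{\Delta t}=1$, have explicit laws independent of $W$), reducing the inner expectation to a computation about a pure diffusion on $[t_n,t_1]\cup[t_1,t_{n+1}]$ where the Taylor/Itô-isometry estimates apply in the usual way; the exit-event indicator then contributes at most a bounded factor that does not spoil the $\mathcal{O}(\Delta t)$ rate.
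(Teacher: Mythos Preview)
Your approach is essentially the same as the paper's---decompose $\cev X-\cev V$ into the pure drift-plus-Brownian increment, expand $u$, and absorb the $\mathcal{O}(\Delta t)$ prefactor from $\mathbb{P}(N_{\Delta t}=1)$---but Step~3 has a real gap that is exactly where the hypothesis \eqref{cond_bound1} is supposed to enter, and you never invoke it.

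Concretely, in Step~3 you assert that the increment $\cev X^{n+1}_{\tau_n}-\cev V^{n+1}_{t_n}$ ``is again of the form $\int_{\tau_n}^{t_{n+1}}b\,dt+\int_{\tau_n}^{t_{n+1}}\sigma\,dW_t$''. But your Step~1 only gives $\cev X^{n+1}_{\tau_n}-\cev V^{n+1}_{\tau_n}$ in that form; to get $\cev V^{n+1}_{t_n}$ instead you need $\cev V^{n+1}_{\tau_n}=\cev V^{n+1}_{t_n}$, which holds only if the single jump time $t_{\rm jump}$ satisfies $t_{\rm jump}\ge\tau_n$. On the complementary event $\{t_{\rm jump}<\tau_n\}$ (the process exits purely by diffusion before any jump occurs, in the backward sense), one has $\cev V^{n+1}_{\tau_n}=x$ while $\cev V^{n+1}_{t_n}=x+c(\cdot)$, an $\mathcal{O}(1)$ discrepancy that wrecks the Taylor estimate. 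The paper handles this by an explicit three-way case split on the relative order of $t_{\rm jump}$ and $\tau_n$: the bad case $\{t_{\rm jump}<\tau_n\}$ is contained in the event that the pure diffusion exits $\mathcal{D}$ within $[\tau_n,t_{n+1}]$, whose probability is bounded by \eqref{stoperr} under the mesh condition \eqref{cond_bound1}, hence is $o((\Delta t)^2)$ and can be discarded. You need to add this case split; without it the lemma's hypothesis is unused and the argument is incomplete.

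On the other hand, your ``Main obstacle'' paragraph is more honest than the paper about the Brownian cross term: the paper simply writes $\mathbb{E}\bigl[\int_{\tau_n}^{t_{n+1}}\sigma\,dW_t\bigr]=0$ without addressing the conditioning on $\{N_{\Delta t}=1,\ t_{\rm jump}\ge\tau_n\ge t_n\}$. Your proposed fix---condition on the jump time and mark, which under $\{N_{\Delta t}=1\}$ are independent of $W$---is the right way to make that step rigorous.
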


\begin{proof}
We first estimate the truncation error $R^{n+1}_4$. To proceed, we define the time instant when jump occurs by $t_{\rm jump}$, and we have $t_n \le t_{\rm jump} \leq t_{n+1}$. The scenarios of $\cev{X}^{n+1}_t$ exiting the domain $\mathcal{D}$ can be categorized into three cases, i.e.,
\vspace{0.2cm}
\begin{enumerate}[leftmargin=30pt]\itemsep0.1cm
    \item $t_n \le t_{\rm jump} <\tau_n < t_{n+1}$, i.e., $\cev{X}^{n+1}_t$ exits $\mathcal{D}$ before the jump;
    \item $t_n \le t_{\rm jump} =\tau_n \le t_{n+1}$, i.e., $\cev{X}^{n+1}_t$ exits $\mathcal{D}$ due to the jump;
    \item $t_n \le \tau_n < t_{\rm jump} \le t_{n+1}$, i.e., $\cev{X}^{n+1}_t$ exits $\mathcal{D}$ after the jump.
\end{enumerate}

In the first case, we learn from Lemma \ref{lem_R23} that when $\cev{X}^{n+1}_t$ starting from a grid point on $\mathcal{S}$, the probability of  $\cev{X}^{n+1}_t$ exiting $\mathcal{D}$ within $[t_n, t_{n+1}]$ without a Poisson jump is very small. In fact, for any $\varepsilon>0$, we have
\begin{equation}\label{eq_r4_1}
    \begin{aligned}
 &\mathbb{P}(N_{\Delta t} = 1,\tau_{n} >t_{\rm jump} \ge t_{n}) \left|\mathbb{E}\left[  u(\tau_{n},\cev{X}^{n+1}_{\tau_n}) - u(t_{n},\cev{V}^{n+1}_{t_n})\,\big|\,N_{\Delta t} = 1\right]\right|\\
& \leq C\, \mathbb{P}(N_{t_{n+1}-\tau_n} = 0,\tau_{n} \ge t_{n}) \leq C (\Delta t)^\varepsilon \exp\left(-\frac{1}{(\Delta t)^{2\varepsilon}}\right) \le C(\Delta t)^2,   
    \end{aligned}
\end{equation}
for sufficiently small $\Delta t$, where the constant $C>0$ only depends on $u$.

In both second and third cases, we have
\begin{equation}
\begin{aligned}
\cev{{X}}^{n+1}_{\tau_n} = \cev{V}^{n+1}_{\tau_n} + \int^{t_{n+1}}_{\tau_n}b(t,\cev{X}_{t}^{n+1})d{t} + \int^{t_{n+1}}_{\tau_n}\sigma(t,\cev{X}_{t}^{n+1})dW_{t} .
\end{aligned}
\end{equation}
Because the compound Poisson process has the property $\cev{V}^{n+1}_{t_n} = \cev{V}^{n+1}_{\tau_n}$, we apply the It\^o formula to $u(\tau_n,\cev{{X}}^{n+1}_{\tau_n})$ at point $(t_n,\cev{V}^{n+1}_{t_n})$ and obtain
\begin{equation}\label{eq_ito}
\begin{aligned}
u(\tau_n,\cev{{X}}^{n+1}_{\tau_n}) =&\; u(t_n,\cev{V}^{n+1}_{t_n}) + \int_{t_n}^{\tau_n}\frac{\partial u}{\partial {t}} d{t} + \int^{t_{n+1}}_{\tau_n}\bigg( b\frac{\partial u}{\partial x} (t,\cev{X}_{t}^{n+1})
+ \frac{\sigma^2}{2}\frac{\partial^2 u}{\partial x^2}\bigg)d{t}\\
&\;+ \int_{\tau_n}^{t_{n+1}}\sigma(t,\cev{X}_{t}^{n+1}) dW_t + o(\Delta t).
\end{aligned}
\end{equation}
Substituting Eq.~\eqref{eq_ito} into Eq.~\eqref{r4}, we have 
\begin{equation}\label{eq_r4_2}
\begin{aligned}
&\mathbb{P}(N_{\Delta t} = 1,t_{\rm jump}\ge \tau_{n} \ge t_{n}) \left|\mathbb{E} \left[  u(\tau_{n},\cev{X}^{n+1}_{\tau_n}) - u(t_{n},\cev{V}^{n+1}_{t_n})\,\big|\,N_{\Delta t} = 1\right]\right|\\
&\leq  C\Delta t \Big|\mathbb{E}\Big[\int_{\tau_n}^{t_{n+1}}\sigma(t,\cev{X}_{t}^{n+1}) dW_t\Big]\Big| + C(\Delta t)^2.
\end{aligned}
\end{equation}
Due to the martingale property of the Brownian motion,
we have
\begin{equation}\label{eq_mart}
\mathbb{E}\Big[\int_{\tau_n}^{t_{n+1}}\sigma(t,\cev{X}_{t}^{n+1}) dW_t\Big] = 0.
\end{equation}
Combining Eq.~\eqref{eq_r4_1} and Eq.~\eqref{eq_r4_2}, we have 
\[
|R^{n+1}_4| \leq C(\Delta t)^2. 
\]
For the error $R^{n+1}_5$, we apply the It\^o formula for $u(t_n,\cev{{X}}^{n+1}_{t_n})$ at point $(t_n,\cev{V}^{n+1}_{t_n})$. Following the same procedure in Eqs.~\eqref{eq_ito}-\eqref{eq_mart}, we obtain
\[
|R^{n+1}_5| \leq C(\Delta t)^2. 
\]
\end{proof}

\subsubsection{The estimate of \texorpdfstring{$R^{n+1}_{6}$}{lg}}
The truncation error $R^{n+1}_6$ is defined when we neglect the case of the Poisson process having $k\ge 2$ jumps. Specifically, the probability of the Poisson process $N_{\Delta t}$ having $k\ge 2$ jumps within $[t_n,t_{n+1})$ is of the order  $\mathcal{O}((\lambda\Delta t)^2)$, where the intensity $\lambda$ is assumed to be bounded in Eq.~\eqref{cond_lambda}.
Hence the error $R^{n+1}_6$ in Eq.~\eqref{eq_R6} has the bound
\begin{equation}\label{R_2}
\begin{aligned}
 |R^{n+1}_6| \le \sum_{k=2}^{\infty}\mathbb{P}(N_{\Delta t}  = k) \left|\mathbb{E} \left[u(\tau_{n}\vee t_{n},\cev{X}^{n+1}_{\tau_n \vee t_n})|N_{\Delta t} =k \right]\right| \leq C(\Delta t)^2,
\end{aligned}
\end{equation}
where the constant $C$ depends on $u$ and $\lambda$.

\subsubsection{The estimate of  \texorpdfstring{$R^{n+1}_{8}$}{lg}}\label{sec:R45}
We analyze the quadrature rule error $R^{n+1}_8$ defined in Eq.~\eqref{error_quad} in the case that the Gauss-Hermite quadrature rule is used to approximate the integral with respect to the Brownian motion, and the trapezoidal rule is used to approximate the integral with respect to the jump. 
%
%

Let $M$ denote the number of Gauss-Hermite quadrature points in each dimension. If $u(t,\cdot)$ is sufficiently smooth, i.e., $\partial^{2M} u/\partial \xi^{2M}$ is bounded, then the Hermite quadrature error is bounded by \cite{2013JSV...332.4403B,yang2021feynman}
\begin{equation}\label{GH_quad}
\left|{\mathbb{E}}\left[u(t_{n},\cev{X}_n^{n+1})\,|\,N_{\Delta t} = 0\right] - \widehat{\mathbb{E}}\left[u(t_{n},\cev{X}_n^{n+1})\,|\,N_{\Delta t} = 0\right]\right| \le C\frac{M!}{2^{M}(2M)!} (\Delta t)^{M},
\end{equation}
where the constant $C$ is independent of $M$ and $\Delta t$. Note that the factor $(\Delta t)^{M}$ comes from the $2{M}$-th order differentiation 
of the function $u(t,\cdot)$ with respect to $\xi$ defined in Eq.~\eqref{E0}.


%
To approximate the integral with respect to jump variable $q$ in Eq.~\eqref{int_ejump}, we divide the interaction domain $E$ by equally spaced mesh size $h>0$. 
Using trapezoidal rule in Eq.~\eqref{E2}, we have the bound
\begin{equation}\label{NC_quad}
\left|\mathbb{E}[u(t_n, \cev{V}_{n}^{n+1})\, \big|\, N_{\Delta t} = 1] - \widetilde{\mathbb{E}}[u(t_n, \cev{V}_{n}^{n+1})\, \big|\, N_{\Delta t} = 1]\right|\le C h^2,
\end{equation}
where constant $C$ depends on the second derivative $\partial^{2} u/\partial q^{2}$ and the volume of $E$. 


Combining Eq.~\eqref{GH_quad} and Eq.~\eqref{NC_quad}, $R^{n+1}_8$ in  Eq.~\eqref{error_quad} is bounded by
\begin{equation}\label{e_13}
\begin{aligned}
|R_8^{n+1}| \le & \;\mathbb{P}(N_{\Delta t} = 0) \left|{\mathbb{E}}\left[u(t_n, \cev{X}_{n}^{n+1})\, \big|\, N_{\Delta t} = 0\right] - \widehat{\mathbb{E}}\left[u(t_n, \cev{X}_{n}^{n+1})\, \big|\, N_{\Delta t} = 0\right]\right|\\
&+  \; \mathbb{P}(N_{\Delta t} = 1)\left| {\mathbb{E}}\left[u(t_n, \cev{V}_{n}^{n+1})\, \big|\, N_{\Delta t} = 1\right] - \widetilde{\mathbb{E}}\left[u(t_n, \cev{V}_{n}^{n+1})\, \big|\, N_{\Delta t} = 1\right]\right|\\
 \leq& \; C \left(\frac{M!}{2^{M}(2M)!} (\Delta t)^{M} + \Delta t \,h^2\right),
    \end{aligned}
\end{equation}
where the constant $C$ is independent of $M$, $h$ and $\Delta t$.

\subsubsection{The estimate of \texorpdfstring{$R^{n+1}_{9}$}{lg}}
For the error $R^{n+1}_9$ in Eq.~\eqref{R9} from the piecewise polynomial interpolation, the standard error bound of piecewise cubic Lagrange interpolation ($p=3$) gives
\begin{equation}\label{analy_R9}
\begin{aligned}
    |R_9^{n+1}| \le & \;\mathbb{P}(N_{\Delta t} = 0) \left|\widehat{\mathbb{E}}\left[u(t_n, \cev{X}_{n}^{n+1}) - u^p(t_n, \cev{X}_{n}^{n+1})\, \big|\, N_{\Delta t} = 0\right] \right| \\
 &+ \; \mathbb{P}(N_{\Delta t} = 1) \left|\widetilde{\mathbb{E}}\left[u(t_n, \cev{V}_{n}^{n+1}) - u^p(t_n, \cev{V}_{n}^{n+1})\, \big|\, N_{\Delta t} = 1\right]\right|
\le \; C(\Delta x)^{4},
\end{aligned}
\end{equation}
where constant $C$ is independent with $\Delta x$.

\subsection{The error estimate of Scheme \ref{scheme1}}\label{analy_full}
We combine the estimates of the truncation errors in Section \ref{sec:residual} to obtain an error estimate of Scheme \ref{scheme1}. Denote
\begin{equation}\label{total_e}
    \max_{j= 1, \ldots, J}\left|e^{n+1}(x_j)\right| := \max_{j= 1, \ldots, J} \left|u(t_{n+1},x_j) - u^{n+1}_j\right|,
\end{equation}
for $n = 0, \ldots, N_t-1$, where $u(t_{n+1},x_j)$ is the exact solution and $u_j^{n+1}$ is the nodal approximation obtained by Scheme \ref{scheme1}. 


%

\begin{theorem}\label{thm_full}
Let $\Delta x$ denote the spatial mesh size, $M$ denote the number of Gauss-Hermite quadrature points, $h$ denote the size of the sub-intervals of the trapezoidal rule, and assume the piecewise cubic ($p=3$) Lagrange interpolation applied in Eq.~\eqref{eq_interp_E}.
Then, for sufficiently small $\Delta t$, we have the following error estimate
\begin{equation}\label{eq_thm49}
\max_{j=1, \ldots, J}|e^{n+1}(x_j)| \leq C\left(\Delta t + \frac{(\Delta x)^{4}}{\Delta t} + (\Delta t)^{M-1} + h^2\right).
\end{equation}
\end{theorem}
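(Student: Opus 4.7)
The plan is to derive an error recursion for $e^{n+1}(x_j)=u(t_{n+1},x_j)-u_j^{n+1}$ by subtracting Step 4 of Scheme~\ref{scheme1} from the exact identity \eqref{eq_utn_approx_j} evaluated at $x=x_j$. After cancellation, the difference is governed by three kinds of contributions: (i) the two conditional-expectation terms, in which the exact $u(t_n,\cdot)$ that appears implicitly through the representation of $u(t_{n+1},x_j)$ must be compared with the interpolant $u^{n,p}(t_n,\cdot)$ built from the nodal values $\{u_j^n\}_{j=1}^J$; (ii) the implicit nonlinear term $\Delta t\,[g(t_{n+1},x_j,u(t_{n+1},x_j))-g(t_{n+1},x_j,u_j^{n+1})]$; and (iii) the cumulative local truncation error $\sum_{i=1}^{9}R_i^{n+1}$, whose bounds were established in Lemmas~\ref{lem_R1}, \ref{lem_R23}, \ref{err_bound}, together with estimates \eqref{R_2}, \eqref{e_13} and \eqref{analy_R9}.

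For (i), I would split
\begin{equation*}
u(t_n,y)-u^{n,p}(t_n,y)\;=\;\bigl[u(t_n,y)-I_p u(t_n,y)\bigr]+\sum_{j=1}^{J} e^{n}(x_j)\,\psi_j(y),
\end{equation*}
where $I_p u$ denotes the cubic Lagrange interpolant of the exact solution. The first bracket is already absorbed by $R_9^{n+1}=O((\Delta x)^4)$; the second contributes, after taking max over $j$, a factor equal to the Lebesgue constant of the piecewise cubic basis (a mesh-independent constant) times $\max_j|e^n(x_j)|$. The quadrature condition \eqref{e54} guarantees that every abscissa used in $\widehat{\mathbb E}$ and $\widetilde{\mathbb E}$ lies in $\overline{\mathcal D}$, so the interpolation bound applies pointwise; the Gauss--Hermite weights and the trapezoidal weights for $\varphi$ are nonnegative and sum to $1$, and the probability prefactors satisfy $\mathbb P(N_{\Delta t}=0)+\mathbb P(N_{\Delta t}=1)\le 1$. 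This yields a contraction-type factor $1+O(\Delta t)$ (not $1+O(1)$) in front of $\max_j|e^n(x_j)|$.

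For (ii), Assumption~\ref{lemma_coeff} ensures $g$ is Lipschitz in its third argument with a constant $L_g$ independent of $n,j$, so that term is bounded by $L_g\Delta t\,|e^{n+1}(x_j)|$ and, for $\Delta t$ small enough so that $L_g\Delta t<1/2$, can be absorbed into the left-hand side of the recursion after taking max over $j$. Combining (i)--(iii), one arrives at an inequality of the form
\begin{equation*}
\max_j|e^{n+1}(x_j)|\;\le\;(1+C\Delta t)\max_j|e^{n}(x_j)|+\mathcal R^{n+1},
\end{equation*}
where $\mathcal R^{n+1}$ collects all nine truncation pieces and, by the lemmas above, obeys $\mathcal R^{n+1}\le C\bigl((\Delta t)^2+(\Delta x)^4+\tfrac{M!}{2^M(2M)!}(\Delta t)^{M}+\Delta t\,h^2\bigr)$.

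The final step is a discrete Gronwall argument over $n=0,\dots,N_t-1$, with $e^0\equiv 0$ supplied by the initial condition. Multiplying by $N_t=T/\Delta t$ converts the per-step $(\Delta t)^2$ into $\Delta t$, the $(\Delta x)^4$ into $(\Delta x)^4/\Delta t$, the $(\Delta t)^M$ factorial contribution into $(\Delta t)^{M-1}$ (the factorial constant being harmless), and the $\Delta t\,h^2$ into $h^2$, giving exactly \eqref{eq_thm49}. The main obstacle is the careful bookkeeping around step (i): one must verify that substituting $u^{n,p}$ for $u(t_n,\cdot)$ \emph{inside} the two conditional expectations produces an error that is (a) linear in $\max_j|e^n(x_j)|$ with a benign multiplicative factor (which requires the partition-of-unity identity $\sum_j\psi_j\equiv1$ and boundedness of the Lebesgue constant), and (b) separated cleanly from the pure interpolation error $R_9^{n+1}$; without this separation the $(\Delta x)^4/\Delta t$ term would not emerge with the sharp constant.
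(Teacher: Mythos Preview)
Your overall architecture---subtract Scheme~\ref{scheme1} from \eqref{eq_utn_approx_j}, isolate the propagated nodal error, the implicit nonlinear term, and the sum $\sum_i R_i^{n+1}$, then close with discrete Gronwall---matches the paper. However, there is a genuine gap in step~(i) that would cause the argument to fail as written.

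The difficulty is your claimed factor $1+O(\Delta t)$ in front of $\max_j|e^n(x_j)|$. After subtracting, the relevant quantity inside $\widehat{\mathbb E}$ is $u^p(t_n,\cdot)-u^{n,p}(\cdot)=\sum_j e^n(x_j)\psi_j(\cdot)$ (the bracket $[u-I_pu]$ you wrote has already been peeled off as $R_9^{n+1}$ and does not reappear here). For piecewise \emph{cubic} Lagrange the basis functions $\psi_j$ change sign, so the Lebesgue constant satisfies $\Lambda=\sup_y\sum_j|\psi_j(y)|>1$. The partition-of-unity identity and the nonnegativity/normalization of the quadrature weights then only give
\[
\Bigl|\widehat{\mathbb E}\Bigl[\textstyle\sum_j e^n(x_j)\psi_j\Bigr]\Bigr|\le \Lambda\max_j|e^n(x_j)|,
\]
not $\max_j|e^n(x_j)|$. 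Since $\mathbb P(N_{\Delta t}=0)=1-O(\Delta t)$, the resulting per-step amplification is $\Lambda(1+O(\Delta t))$ with $\Lambda>1$ fixed, and after $N_t=T/\Delta t$ iterations the Gronwall product behaves like $\Lambda^{T/\Delta t}\to\infty$. So the recursion does not close.

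The paper avoids this by a device you did not invoke: it postulates an auxiliary function $\hat e^{\,n}\in\mathcal C_b^4(\mathcal D)$ that interpolates the nodal errors, $\hat e^{\,n}(x_j)=e^n(x_j)$, and is globally \emph{non-overshooting}, $|\hat e^{\,n}(x)|\le\max_j|e^n(x_j)|$ for all $x$. Then $\sum_j e^n(x_j)\psi_j$ is precisely the cubic interpolant of $\hat e^{\,n}$, so it differs from $\hat e^{\,n}$ by $O((\Delta x)^4)$; and $|\widehat{\mathbb E}[\hat e^{\,n}]|\le\max_j|e^n(x_j)|$ with factor exactly~$1$ thanks to the non-overshooting property and the nonnegative, unit-sum quadrature weights. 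This is what produces the recursion $(1-C\Delta t)\max_j|e^{n+1}|\le(1+C\Delta t)\max_j|e^n|+C((\Delta x)^4+(\Delta t)^2+\cdots)$, and the per-step $(\Delta x)^4$ penalty (incurred every step, not multiplied by $\Delta t$) is exactly the origin of the $(\Delta x)^4/\Delta t$ term in \eqref{eq_thm49}. Your closing remark about ``careful bookkeeping'' correctly senses the issue, but the Lebesgue-constant route you outline cannot supply the needed factor~$1$; the auxiliary non-overshooting interpolant (or an equivalent monotonicity argument) is essential.
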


\begin{proof}

We subtract $u^{n+1}_j$ defined in Scheme \ref{scheme1} from the exact solution $u(t_{n+1},x_j)$ defined in Eq.~\eqref{eq_utn_approx_j} and obtain 
\begin{equation}\label{full_err_analy}
 e^{n+1}(x_j)  =  e^{n+1}_1(x_j) + e^{n+1}_2(x_j)+ e^{n+1}_3(x_j) + \sum_{i=1}^9 R^{n+1}_i,
\end{equation}
where $e^{n+1}_1(x_j)$, $e^{n+1}_2(x_j)$, $e^{n+1}_3(x_j)$ are defined by 
\begin{equation}\label{full_err_analy1}
\begin{aligned}
e^{n+1}_1(x_j) & := \mathbb{P}(N_{\Delta t} = 0)  \widehat{\mathbb{E}}\left[u^p(t_n, \cev{X}_{n}^{n+1}) - u^{n,p}(\cev{X}_{n}^{n+1})\, \big|\, N_{\Delta t} = 0\right], \\[0pt]
e^{n+1}_2(x_j) & :=  \mathbb{P}(N_{\Delta t} = 1)  \widetilde{\mathbb{E}} \left[ u^p(t_{n},\cev{V}^{n+1}_{n})-u^{n,p}(\cev{V}^{n+1}_{n})\, \big|\,N_{\Delta t} = 1\right],\\[0pt]
e^{n+1}_3(x_j) & := \mathbb{E}[(t_{n+1}-\tau_{n} \vee t_{n})]\,g(t_{n+1},x_j,u(t_{n+1},x_j)) - \Delta t g(t_{n+1},x_j,u^{n+1}_j),\\[-0pt]
\end{aligned}
\end{equation}
respectively. 


For $e^{n+1}_1(x_j)$, we introduce an auxiliary function $\hat{e}^{n}(x)$ defined in $\mathcal{D}$ satisfying three properties: (i) $\hat{e}^{n}(x_j) = u(t_{n},x_j) - u^{n}_j$, for $x_j \in \mathcal{S}$, (ii) $\hat{e}^{n}(x)$ is globally non-overshooting, i.e., $|\hat{e}^{n}(x)| \le \max_{j=1,\ldots, J}|\hat{e}^{n}(x_j)|$ for $x\in \mathcal{D}$, and (iii) $\hat{e}^{n}(x) \in \mathcal{C}_b^{4}(\mathcal{D})$.
%
Such smooth function can be constructed using shape-preserving piecewise rational interpolantion \cite{han2018shape,han2021piecewise} or radial basis functions \cite{wu2011new,ahmad2017positivity}. Note that we only need the existence of the function $\hat{e}^{n}(x)$, and do not need to construct $\hat{e}^{n}(x)$ in this error analysis.
When the existence of $\hat{e}^{n}(x)$ is ensured, $u^p(t_n, x) - u^{n,p}(x)$ can be viewed as a piecewise cubic polynomial interpolation ($p=3$) for $\hat{e}^{n}(x)$. Then, we can obtain the following error bound
\begin{equation}\label{append_2}
    \left|u^p(t_n, x) - u^{n,p}(x)- \hat{e}^{n}(x)\right| \le C(\Delta x)^{4}.
\end{equation}
Thus, the error $e^{n+1}_1(x_j)$ in Eq.~\eqref{full_err_analy1} has the bound
\begin{equation}
\begin{aligned}
&\;|e^{n+1}_1(x_j)|\\
=&\; \left|\mathbb{P}(N_{\Delta t} = 0)  \widehat{\mathbb{E}}\left[u^p(t_n, \cev{X}_{n}^{n+1}) - u^{n,p}(\cev{X}_{n}^{n+1})\, \big|\, N_{\Delta t} = 0\right]\right|\\
\le&\;\left| \widehat{\mathbb{E}}\left[u^p(t_n, \cev{X}_{n}^{n+1}) - u^{n,p}(\cev{X}_{n}^{n+1})-\hat{e}^{n}(\cev{X}_{n}^{n+1}) + \hat{e}^{n}(\cev{X}_{n}^{n+1})\, \big|\, N_{\Delta t} = 0\right]\right|\\
\le&\; \left| \widehat{\mathbb{E}}\left[ \hat{e}^{n}(\cev{X}_{n}^{n+1})  \right]\right| +
\left| \widehat{\mathbb{E}}\left[u^p(t_n, \cev{X}_{n}^{n+1}) - u^{n,p}(\cev{X}_{n}^{n+1})-\hat{e}^{n}(\cev{X}_{n}^{n+1}) \big|N_{\Delta t} = 0\right]\right| \\
\le & \; \max_{j=1,\ldots, J}|e^{n}(x_j)| + C(\Delta x)^{4},
\end{aligned}
\end{equation}
where we have $|\hat{e}^{n}(x_j)| = |e^{n}(x_j)|$, for $x_j \in \mathcal{S}$, according to the above definitions of $\hat{e}^{n}(x_j)$ and $e^{n}(x_j)$.

For the error $e^{n+1}_2(x_j)$, we exploit the fact that $\mathbb{P}(N_{\Delta t} = 1) \sim \mathcal{O}(\Delta t)$ to obtain 
\begin{equation}\label{e22}
\begin{aligned}
    |e_2^{n+1}(x_j)| & \le C\Delta t \max_{j=1,\ldots, J}|e^{n}(x_j)| \sum_{j=1}^J \left|\mathbb{E}[\psi_j(\cev{V}^{n+1}_{n})|N_{\Delta t} =1]\right| \\
    & \le 
    C\Delta t \max_{j=1,\ldots, J}|e^{n}(x_j)|,
\end{aligned}
\end{equation}
where the constant $C$ is independent of $\Delta t$.

For the error $e^{n+1}_3(x_j)$, we have
\begin{equation}\label{e33}
\begin{aligned}
|e^{n+1}_3(x_j)| \le & \;\mathbb{E}[\Delta t-(t_{n+1}-\tau_{n} \vee t_{n})] |g(t_{n+1},x_j,u(t_{n+1},x_j))| \\
& \;+ \Delta t |(g(t_{n+1},x_j,u(t_{n+1},x_j)) - g(t_{n+1},x_j,u^{n+1}_j))|\\
\le & \; \mathbb{E}[\Delta t-(t_{n+1}-\tau_{n} \vee t_{n})] |g(t_{n+1},x_j,u(t_{n+1},x_j))| + L\Delta t |e^{n+1}(x_j)|,
\end{aligned}
\end{equation}
where $L$ is the Lipschitz constant of $g$. The expectation $\mathbb{E}[\Delta t-(t_{n+1}-\tau_{n} \vee t_{n})] $ can be estimated by exploiting the fact that $\mathbb{P}(\tau_{n} > t_{n}) \le C \Delta t$ when the spatial mesh $\mathcal{S}$ satisfies the condition in Eq.~\eqref{cond_bound1}, i.e., 
\[
\begin{aligned}
\mathbb{E}[\Delta t-(t_{n+1}-\tau_{n} \vee t_{n})] = \mathbb{P}(\tau_{n} > t_{n})\mathbb{E}[(t_{n+1}-\tau_{n} )] \le C(\Delta t)^2.
\end{aligned}
\]
Substituting the above estimate into Eq.~\eqref{e33}, we have
\begin{equation}\label{analy_e2}
|e^{n+1}_3(x_j)| 
\le L\Delta t |e^{n+1}(x_j)| + C(\Delta t)^2,
\end{equation}
where $L$ is the Lipschitz constant of $g$.

Now we substitute the estimates of truncation errors $R^{n+1}_i$, for $i = 1,\ldots, 9$, obtained in Section \ref{sec:residual} and the estimates of $e^{n+1}_1(x_j)$, $e^{n+1}_2(x_j)$, $e^{n+1}_3(x_j)$ into Eq.~\eqref{full_err_analy} to obtain 
%
%
\begin{equation}
\begin{aligned}
|e^{n+1}(x_j)| \leq& \;\max_{j=1, \ldots, J}|e^{n}(x_j)|
\big(1 + C\Delta t\big)+L\Delta t |e^{n+1}(x_j)|\\
& \; + C\left( (\Delta x)^{4}  + (\Delta t)^2 + (\Delta t)^{M}+ \Delta t h^2\right).
\end{aligned}
\end{equation}
Then we have
\begin{equation}
\begin{aligned}
(1-C\Delta t)\max_{j=1, \ldots, J}|e^{n+1}(x_j)| \leq & (1+C\Delta t) \max_{j =1, \ldots, J}|e^{n}(x_j)|\\
& + C\left( (\Delta x)^{4}  + (\Delta t)^2 +  (\Delta t)^{M} + \Delta t \,h^2\right). \\
\end{aligned}
\end{equation}
From the above, we obtain, for sufficiently small $\Delta t$, 
\begin{equation}\label{analyerr_nodal}
\max_{j=1, \ldots, J}|e^{n+1}(x_j)| \leq C\left(\Delta t + \frac{(\Delta x)^{4}}{\Delta t} +  (\Delta t)^{M-1} + h^2\right),
\end{equation}
where $C$ is a constant independent of $\Delta t$.
\end{proof}

\section{Numerical examples}\label{sec_ex}
In this section we present two numerical examples to demonstrate the performance of the proposed method. Specifically, the example in Section \ref{ex_1} aims at verifying the convergence rate of Scheme \ref{scheme1} proved in Theorem \ref{thm_full}, and the example in Section \ref{ex_2} is to illustrate the application of the proposed method to a problem motivated by the study of heat transport in magnetically confined controlled nuclear fusion plasmas.



\subsection{3D nonlocal diffusion with volume constraints in irregular and bounded domains}\label{ex_1}
We consider the following nonlocal diffusion equation
\begin{equation}\label{pide_ex1}
\begin{array}{lr}
\begin{aligned}
\frac{\partial u}{\partial t}(t,x) - \mathcal{L}[u](t,x) - f(t,x,u)&=0, \quad\quad \;\;\;\quad \forall (t,x) \in (0,T] \times \mathcal{D},\\[0pt]
u(0,x) &= \phi_0(x),\quad\quad  \forall x\in \mathcal{D}\cup \mathcal{D}_{\rm v},\\[4pt]
u(t,x) &= \phi_{\rm v}(t,x), \quad\; \forall (t,x) \in (0,T] \times \mathcal{D}_{\rm v},
\end{aligned}
\end{array}
\end{equation}
with $x := (x_1, x_2, x_3) \in \mathbb{R}^3$, and $\mathcal{L}$ the operator  in Eq.~\eqref{fpon} with coefficients:
\vspace{0.1cm}
\begin{itemize}[leftmargin=20pt]\itemsep0.15cm
    \item Drift term $B(t,x) = t\begin{pmatrix}x_1^5 - \frac{5}{3}x_1^3,\; x_2^5 - \frac{5}{3}x_2^3,\; x_3^5 - \frac{5}{3}x_3^3\end{pmatrix}$
    \item Jump amplitude $c(t,x,q) = \begin{pmatrix}q_1,\; q_2 + \frac{t}{2},\; q_3 +\frac{1}{4}x_3\end{pmatrix}$
    \item Kernel $\gamma(q) = {\bf 1}_{|q|\leq \delta}/\delta^3$
    \item Diffusion coefficient $\sigma(t,x) = (1-t)
    \tiny
    \begin{pmatrix}\sin(x_1)& 0 & 0\\
    0 & \cos(x_2) & 0 \\
    0 & 0 & x_3\\
    \end{pmatrix}$
\end{itemize}
Following the method of manufactured solutions,
we choose 
\begin{equation}\label{ex1_exact}
    u(t,x) = \sin(5t)( x_1^4-x_1^2 + x_2^4-x_2^2 + x_3^4-x_3^2),
\end{equation}
which determines the initial condition $\phi_0(x)$, and the volume constraint $\phi_{\rm v}(t,x)$,
and construct the nonlinear forcing term $f(t,x,u)$ as
\begin{equation}
\begin{aligned}
f &\; = \; 5\cos(5t)( x_1^4-x_1^2 + x_2^4-x_2^2 + x_3^4-x_3^2)\\
 &\;- u^2(t,x) - \sin(5t)\left( \sum_{i = 1}^3 (4x_i^3-2x_i)B_i(t,x)\right)\\
&\; - u(t,x)\Big(2\cos(2x_1) - 2\cos(2x_2) + 2\Big) \\
&\;- \sin(5t)\Big(2(4x_1^3 - 2x_1)\sin(2x_1)- 2(4x_2^3 - 2x_2)\sin(2x_2) + 16x_3^4 - 8x_3^2\Big)\\ &\;- \sin(5t)\Big((12x_1^2 - 2)\sin^2(x_1) + (12x_2^2 - 2)\cos^2(x_1) + 12x_3^4 - 2x_3^2\Big)\\
&\;-\sin(5t) \bigg( \frac{12\pi}{35}\delta^4 + \frac{4\pi}{15}\delta^2 \Big(6x_1^2 + 6x_2t + 6x_2^2 + \frac{3}{2}t^2 + \frac{75}{8}x_3^2 - 1\Big)\\
&\;+\frac{4\pi}{3}\Big(2x_2^3t + \frac{1}{2}x_2t^3 + \frac{3}{2}x_2^2t^2 + \frac{t^4}{16} + \frac{77}{64}x_3^4\Big)
\bigg),
\end{aligned}
\end{equation}
to guaranteed that $u$ is an exact  solution of Eq.~(\ref{pide_ex1}).
%

 The interaction domain $\mathcal{D}_{\rm v}$ is defined by the extension from $\mathcal{D}$ by a radius of the horizon $\delta$. 
We set the terminal time $T = 0.5$, $\delta = 0.3$ and solve Eq.~\eqref{pide_ex1} on the cubic domain $[0,1]^3$. We use piecewise cubic Lagrange interpolation to approximate $u(t,x)$ in $\mathcal{D}$ in Eq.~\eqref{eq_interp_E}, and use the trapezoidal quadrature rule  to approximate the conditional expectation in Eq.~\eqref{E2}.
The goal of this example is to demonstrate Scheme \ref{scheme1} can achieve first-order convergence with respect to $\Delta t$ when we use the error estimate in Theorem \ref{thm_full} to choose
the spatial mesh size $\Delta x$, the number of Gauss-Hermite quadrature points $M$ and mesh size $h$ for the Newton-Cotes quadrature rule. According to the error bound in Eq.~\eqref{eq_thm49}, we set $M = 2$, $h \sim (\Delta t)^{\frac{1}{2}}$, and $\Delta x \sim (\Delta t)^{\frac{1}{2}}$ to achieve the first-order convergence with respect to $\mathcal{O}(\Delta t)$.
%

%

Table \ref{tb_1} demonstrates how $\Delta x$ affects the convergence rate while keeping $M = 2$ and $h \sim (\Delta t)^{\frac{1}{2}}$. When choosing $\Delta x$ guided by Theorem \ref{thm_full}, Scheme \ref{scheme1} achieves the desired $\mathcal{O}(\Delta t)$ convergence rate. When enlarging $\Delta x$ to $(\Delta t)^{1/3}$, Theorem \ref{thm_full} suggests that the total error is dominated by the term $(\Delta x)^4/\Delta t = (\Delta t)^{1/3}$ in Eq.\eqref{eq_thm49}. In the experiment, we obtain 0.3641 convergence rate that is very close to the 1/3-order theoretical convergence rate. This indicates the tightness of our error bound. On the other hand, when reducing $\Delta x$ to $\Delta t$, we observe that Scheme \ref{scheme1} only achieves half-order convergence. This is due to the violation of the condition in Eq.~\eqref{cond_bound1}, such that the error caused by neglecting the truncation errors $R^{n+1}_2$ in Eq.~\eqref{e100} and $R^{n+1}_3$ in Eq.~\eqref{e101} in Scheme \ref{scheme1} becomes dominant.

Table \ref{tb_quadrature} demonstrates the influence of the number of the Gauss-Hermite quadrature points $M$ on the $L^2$ error and the convergence rate with respect to $\Delta t$ while keeping $h \sim {(\Delta t)^{1/2}}$, $\Delta x \sim {(\Delta t)^{1/2}}$. When using only one Gauss-Hermite quadrature point, i.e., $M=1$, it is equivalent to completely neglecting the local diffusion in Eq.~\eqref{pide_ex1}. Then it is expected that Scheme \ref{scheme1} cannot converge. On the other hand, using three quadrature points does not improve the convergence rate, which verifies the correctness of the error bound in Theorem \ref{thm_full}.

Table \ref{tb_quadrature1} demonstrates the influence of mesh size $h$ of the trapezoidal rule on the $L^2$ error and the convergence rate with respect to $\Delta t$ while keeping $\Delta x \sim {(\Delta t)^{1/2}}$, $M = 2$. As expected, enlarging $h$ to $(\Delta t)^{1/4}$ reduces the convergence rate to half order, which is consistent with the error bound in Eq.~\eqref{eq_thm49}.

%

\begin{table}[h!]
\footnotesize
\renewcommand{\arraystretch}{1.3}
\centering
\caption{Demonstration of the influence of $\Delta x$ on the $L^2$ error and the convergence rate (CR) with respect to $\Delta t$ while keeping $h \sim {(\Delta t)^{1/2}}$, $M = 2$, for the example in Section \ref{pide_ex1}.}\label{tb_1}
\vspace{-0.2cm}
  \begin{tabular}{c c c c c c c}
  \hline
  $\Delta t$ & $2^{-6}$ & $2^{-7}$ & $2^{-8}$ & $2^{-9}$ & $2^{-10}$& CR\\
  \hline
     $\Delta x \sim (\Delta t)^{\frac{1}{2}}$    & 1.3800e-02 &6.9718e-03& 3.4739e-03 &1.7379e-03 &8.9337e-04   & 0.9903\\
     $\Delta x \sim (\Delta t)^{\frac{1}{3}}$  &   1.4317e-02 &  1.0397e-02  & 7.6536e-03 &  6.4154e-03  & 5.1604e-03 & 0.3641\\
     $\Delta x \sim \Delta t$  & 2.0540e-02 &  1.4316e-02  & 1.0091e-02  & 7.1694e-03 &  5.1054e-03  & 0.5014 \\
  \hline
  \end{tabular}
  \vspace{-0.0cm}
\end{table}
\begin{table}[h!]
\footnotesize
\renewcommand{\arraystretch}{1.3}
\centering
 \caption{Demonstration of the influence of the number of the Gauss-Hermite quadrature points $M$ on the $L^2$ error and the convergence rate (CR) with respect to $\Delta t$ while keeping $h \sim {(\Delta t)^{1/2}}$, $\Delta x \sim {(\Delta t)^{1/2}}$, for the example in Section \ref{pide_ex1}.}
 \label{tb_quadrature}
 \vspace{-0.2cm}
  \begin{tabular}{c c c c c c c}
   \hline
 $\Delta t$ & $2^{-6}$ & $2^{-7}$ & $2^{-8}$ & $2^{-9}$ & $2^{-10}$& CR\\
   \hline
     $M = 1$ &  3.9569e-02 & 5.3194e-02  & 6.4850e-02  & 7.7741e-02 & 8.8012e-02 & -0.2854  \\
     \hline
      $M = 2$  & 1.3800e-02 &6.9718e-03& 3.4739e-03 &1.7379e-03 &8.9337e-04   & 0.9903 \\
   \hline
    $M = 3$ &1.3111e-02& 6.5217e-03& 3.1162e-03 & 1.5971e-03 &7.8121e-04     & 1.0168  \\
   \hline
  \end{tabular}
\end{table}
\begin{table}[h!]
\footnotesize
\renewcommand{\arraystretch}{1.3}
\centering
 \caption{Demonstration of the influence of mesh size $h$ of the trapezoidal rule on the $L^2$ error and the convergence rate (CR) with respect to $\Delta t$ while keeping $\Delta x \sim {(\Delta t)^{1/2}}$, $M = 2$, for the example in Section \ref{pide_ex1}.}
 \label{tb_quadrature1}
 \vspace{-0.2cm}
  \begin{tabular}{c c c c c c c}
     \hline
 $\Delta t$ & $2^{-6}$ & $2^{-7}$ & $2^{-8}$ & $2^{-9}$ & $2^{-10}$& CR\\
   \hline
   $h \sim \Delta t$ &1.3145e-02 & 6.2181e-03 &  3.1537e-03 &1.6222e-03& 7.8090e-04  & 1.0085\\
   \hline 
     $h \sim (\Delta t)^{\frac{1}{2}}$  & 1.3800e-02 &6.9718e-03& 3.4739e-03 &1.7379e-03 &8.9337e-04   & 0.9903 \\
   \hline   
   $h \sim (\Delta t)^{\frac{1}{4}}$   &6.8339e-02 & 4.9122e-02  & 3.1491e-02  & 2.0533e-02 & 1.2051e-02 & 0.4889\\
   \hline
  \end{tabular}
  \vspace{-0.1cm}
\end{table}

Next, we test the performance of Scheme \ref{scheme1} by solving Eq.~\eqref{pide_ex1} in the four domains of different shapes, shown in Figure \ref{fig_ex1_domain}, in order to demonstrate the broad applicability of our method. The interaction domain $\mathcal{D}_{\rm v}$ is defined by the extension from $\mathcal{D}$ by a radius of the horizon $\delta$. The tetrahedral meshes are generated using DistMesh code \cite{persson2004simple} with the maximum mesh size being 0.025. We set $T = 0.5$, $h \sim (\Delta t)^{1/2}$ guided by Theorem \ref{thm_full}.
The result are shown in Table \ref{tb_2}. As expected, we observe the first-order convergence with respect to $\Delta t$ in all the four cases. 
\begin{figure}[h!]
    \centering
  {\includegraphics[width=\textwidth]{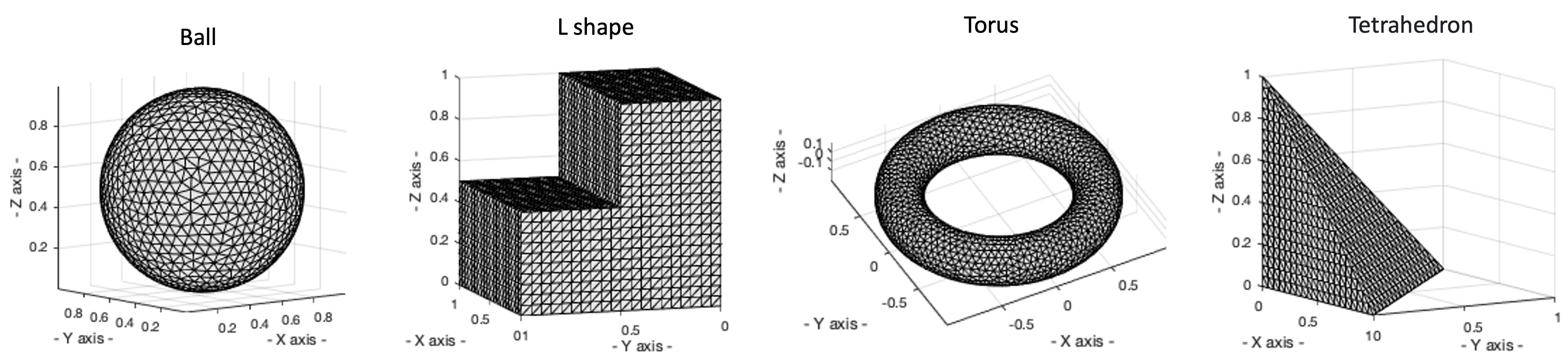}}
  \vspace{-0.7cm}
    \caption{The four domains of different shapes used to test the performance of Scheme \ref{scheme1} by solving Eq.~\eqref{pide_ex1}. The tetrahedral meshes for the four domains were generated with the maximum mesh size being 0.025.}
    \label{fig_ex1_domain}
    \vspace{-0.3cm}
\end{figure}

\begin{table}[h!]
\footnotesize
\centering
\renewcommand{\arraystretch}{1.25}
 \caption{The $L^2$ errors and the convergence rates with respect to $\Delta t$ for solving Eq.~\eqref{pide_ex1} in the four domains in Figure \ref{fig_ex1_domain}, where $T = 0.5$, $h \sim \sqrt{\Delta t}$, $M = 2$.}
 \label{tb_2}
 \vspace{-0.2cm}
  \begin{tabular}{c c c c c c}
   \hline
  $\Delta t$ & $0.1$ & $0.05$ & $0.025$ & $0.0125$ & CR\\
   \hline
    Ball  & 0.0501&  0.0228 & 0.0109     & 0.0051  &  1.0953 \\
    L shape & 0.0533 & 0.0296  &  0.0122&  0.0061   & 1.0660\\
    Torus  & 0.0301 & 0.0169   & 0.0081&   0.0041     & 0.9689 \\
    Tetrahedron & 0.0314 &  0.0123& 0.0075     & 0.0037   & 0.9969  \\
   \hline
  \end{tabular}
\end{table}

\subsection{Heat transport in magnetically confined plasma for controlled nuclear fusion}\label{ex_2}
This example is motivated by the study of heat transport in magnetically confined plasmas. The most promising approach to achieve controlled nuclear fusion for energy production is to heat a plasma composed of hydrogen isotopes at high enough temperature, high enough density and for a long enough time. Among the many  complex physical processes that need to be understood to achieve this, the transport of particles and heat play a key role. In particular, if the heat losses are too high the plasma will not reach the required temperature for nuclear fusion. Transport theories based on locality assumptions lead to the well understood advection-diffusion partial differential equations models. However recent studies have cast doubts on these simple models due to the role played by non-local transport. In particular, in a magnetized plasma transport is highly anisotropic: parallel (along the magnetic field) transport can be nonlocal, while perpendicular transport is usually local. As a simplified model to study the role of this local/nonlocal anisotropy we consider the following model 
\begin{equation}\label{pide_ex3}
\frac{\partial u}{\partial t}(t,\phi,\theta,r) - \mathcal{L}[u](t,\phi,\theta,r) =0,
\end{equation} 
where $u$ represents the scalar filed being transported, e.g., temperature, and the operator ${L}$ is given by
\begin{equation}
\mathcal{L}[u](t,\phi,\theta) =  \int_{|\bf{q}|\leq \pi}[u(t,\phi + q_1, \theta +q_2,r) - u(t,\phi,\theta,r)] \gamma(\hat{q}_1,\hat{q}_2)d{\bf q} + \frac{1}{2}\sigma^2 \frac{\partial^2 u}{\partial r^2}.
\label{operator}
\end{equation}

\begin{figure}[h!]
    \centering
  {\includegraphics[scale = 0.3]{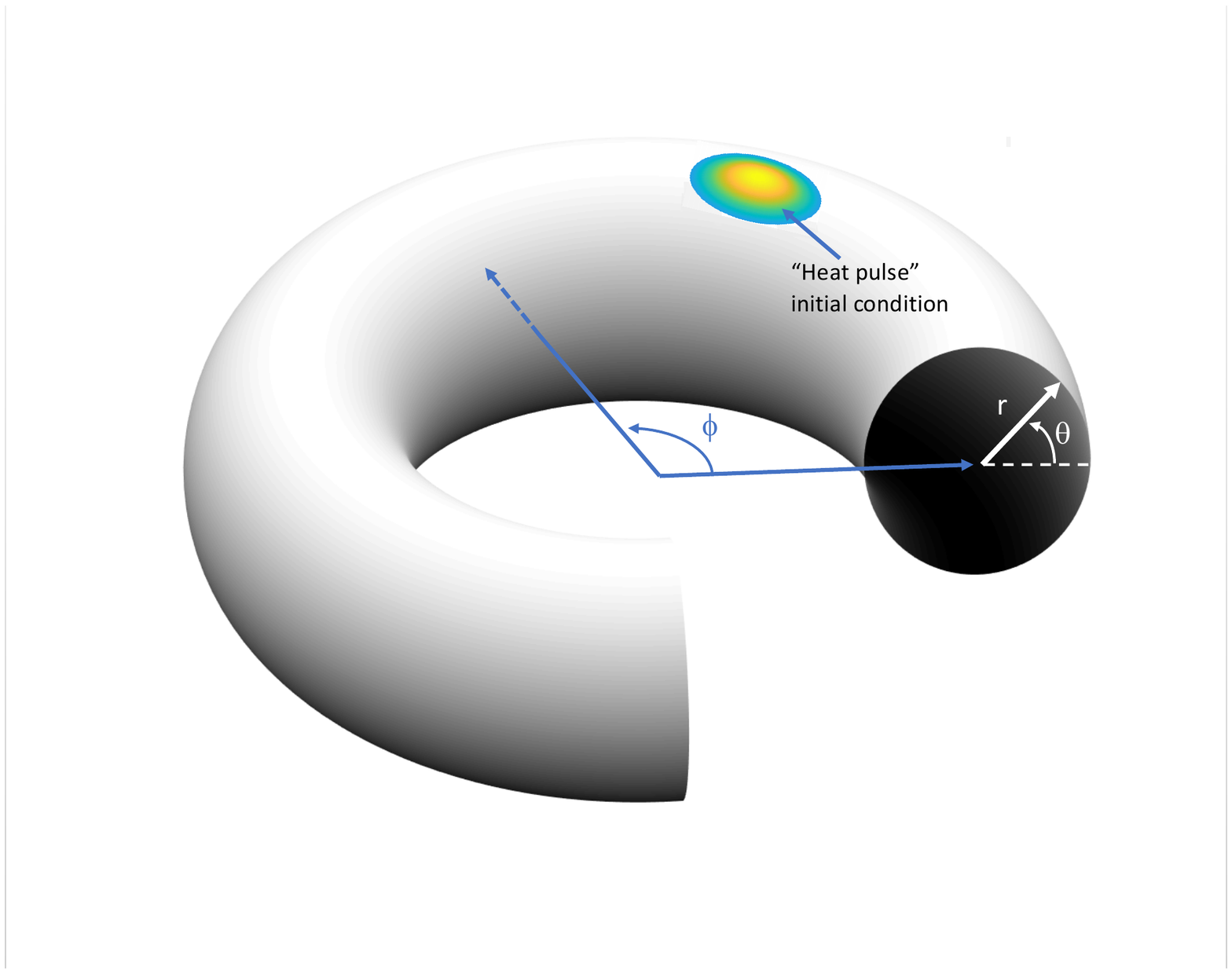} }
    \caption{Schematic representation of ``heat pulse" propagation problem in 3D toroidal geometry. The toroidal domain is parameterized by the poloidal, $0 \leq \theta <2 \pi$, and toroidal, $0 \leq \phi < 2\pi$, angles and the minor radius $0<r \leq 0.5$. The initial condition in Eq.\eqref{eq_ex2_ini} corresponds to a ``heat pulse" localized at $(r,\phi, \theta)=(r_0,\phi_0, \theta_0)$. The problem is to compute the spatiotemporal evolution of the  ``heat pulse" inside the torus by solving the nonlocal transport equation in Eq.\eqref{pide_ex3} with boundary conditions in Eqs.\eqref{eq_ex2_b1} and \eqref{eq_ex2_b2}. This problem is motivated by the study of heat transport in magnetically confined plasmas in controlled nuclear fusion. 
    }
    \label{torus}
\end{figure}
The domain of interest is the 3D torus shown in Fig.~\ref{torus} where $0\leq \phi < 2 \pi$ and 
$0\leq \theta < 2 \pi$ are the toroidal and poloidal angles and $0<r \leq 0.5$ is the minor radius.
The boundary conditions are double periodic in $\phi$ and $\theta$
\begin{equation}\label{eq_ex2_b1}
 u(t,\phi,\theta,r)=u(t,\phi+2 \pi,\theta+ 2\pi, r)\, ,
\end{equation}
and
\begin{equation}\label{eq_ex2_b2}
 \partial_r u(t,\phi,\theta,r=0)=0 \qquad  
 u(t,\phi,\theta,r=0.5)=0 \, .
\end{equation}
According to the last term on the right hand side of Eq.~(\ref{operator}) transport in the radial direction is assumed to be locally diffusive. On the other hand, transport in the $\phi$ and $\theta$ directions is nonlocal and governed by the kernel 
\[
\gamma(\hat{q}_1,\hat{q}_2) = \frac{e^{\kappa_1 \cos{\hat{q}_1}}e^{\kappa_2 \cos{\hat{q}_2}}}{\int_{|\bf{\hat{q}}|\leq \pi}\gamma(\hat{q}_1,\hat{q}_2)d\bf{\hat{q}}} , 
\]
that involves truncated von Mises probability density functions.
In this model the strength of the non-locality in the angular variables $\phi$ and $\theta$ is determined by the parameters $\kappa_1$ and $\kappa_2$ respectively. In particular, the smaller the value of $\kappa_i$ the  stronger the non-locality in the corresponding direction. In fusion plasmas, the magnetic field winds over the toroidal surfaces and as a result the direction of stronger non-locality is not aligned with the $\phi$ or $\theta$ direction. To incorporate this important aspect in the model we define
\begin{equation}\label{eq_rota}
    \begin{aligned}
\hat{q}_1 =& q_1 \cos{\psi} + q_2 \sin{\psi},\\
\hat{q}_2 =& -q_1 \sin{\psi} + q_2 \cos{\psi}.
\end{aligned}
\end{equation}
where the angle $\psi$ determines the direction of maximum non-locality. 
The initial condition corresponds to a ``heat pulse" represented by a Gaussian distribution centered at $(\phi_0,\theta_0,r_0)$
\begin{equation}\label{eq_ex2_ini}
    u(0,\phi,\theta,r) = \exp\Big(-\frac{(\phi-\phi_0)^2}{0.5}-\frac{(\theta-\theta_0)^2}{0.5}-\frac{(r-r_0)^2}{0.005}\Big)
    \end{equation}

In this numerical experiment we use $\sigma = 0.01$,
$\psi=30^{\circ}$, $\kappa_1=20$, $r_0=0.25$, $\phi_0=\pi$ and $\theta_0=\pi$. To explore the role of different levels of nonlocality we will consider the  following values of $\kappa_2=20\, , 10\, , 5  \, , 2 \, , 1$ and $0.1$. The maximum integration time will be $t=32$, and 
to visualize the results, the value of $u$ on a given torus with a fixed $r$ at a time $t$, will be represented on the  double periodic  $[0, 2\pi) \times [0, 2\pi)$ Cartesian plane $(\phi,\theta)$.  
%
%
\begin{figure}[htbp]
    \centering
  \includegraphics[width=0.8\textwidth]{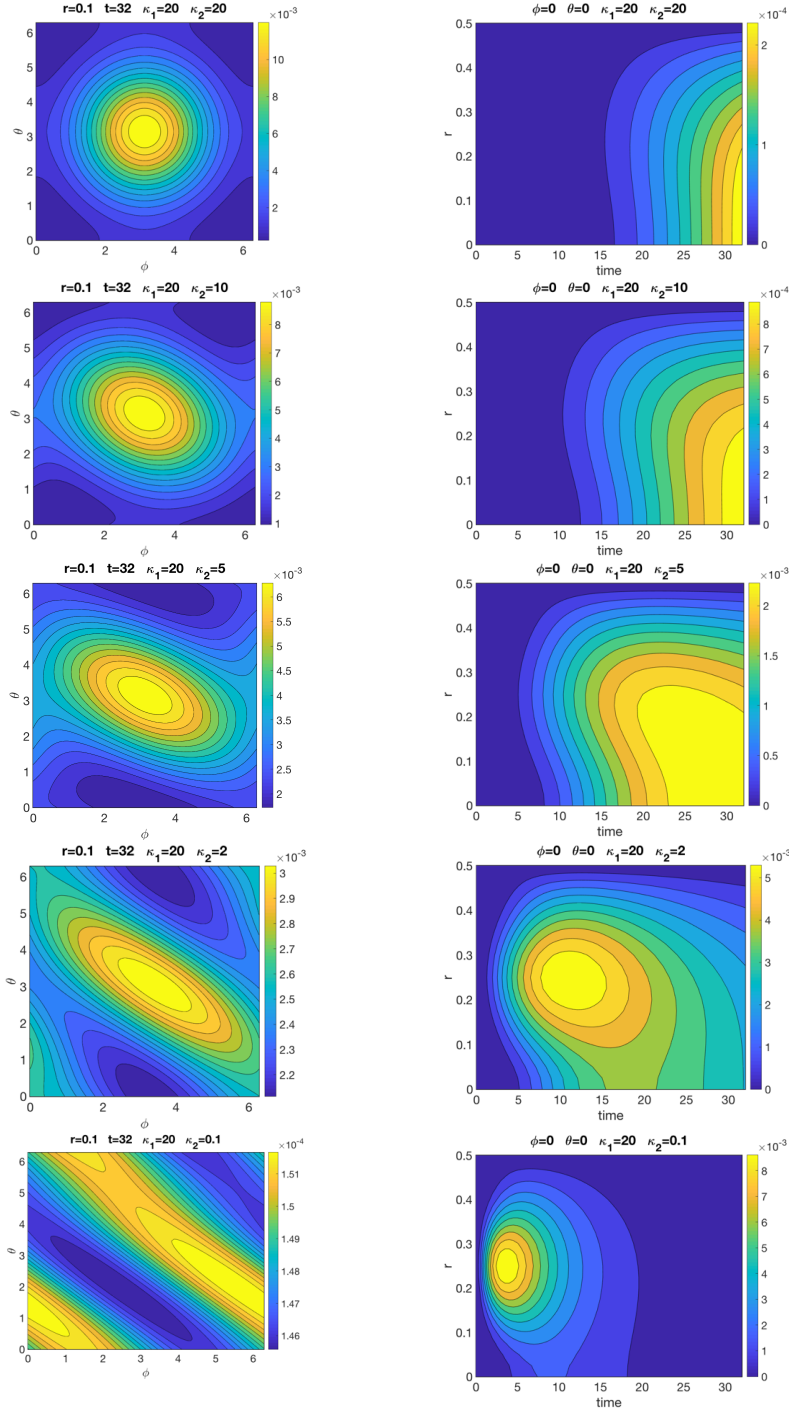}
  \vspace{-0.4cm}
    \caption{Spatiotemporal evolution of heat pulse for different levels of non-locality. The left column shows contour plots of $u$ at $t=32$ (final time) in the $\phi \times \theta$ double periodic plane (torus) at the fixed minor radius $r=0.1$. The right column shows the corresponding contour plots of the time evolution of the radial profile of $u$ at $\phi=\theta=0$. In all cases $\kappa_1=20$ while the value of $\kappa_2$ is changed from weak non-locality, $\kappa_2=20$, to strong non-locality, $\kappa_2=1$. The level of anisotropy was kept fixed at $\psi=30^\circ$. 
    }
    \label{cases}
    \vspace{-0.6cm}
\end{figure}

Figure~\ref{cases} shows the spatiotemporal evolution of $u$. As indicated before, the initial pulse is centered at $r=0.25$.  The plots on the left column of Fig.~\ref{cases} show contour plots of $u$ at the final time, $t=32$, in the $(\phi,\theta)$ double periodic Cartesian plane at the inner radius $r=0.1$, for different levels of no-locality.
It is observed that as  $\kappa_2$ is decreased, the non-locality gives rise to a stronger mixing and eventual filamentation of the initial Gaussian pulse with a tilt determined by the anisotropy direction parameter $\psi$.  Note also that this mxing in the $(\phi,\theta)$ plane is accompanied by a reduction of the peak value of $u$. The accompanying plots on the right column of  Fig.~\ref{cases} show the radial profiles of the response in time  at $(\phi,\theta)=(0,0)$, a location opposite to  where the initial pulse was introduced,  $(\phi,\theta)=(\pi,\pi)$. It is observed that, as the nonlocality increases, the response is faster and the peak of the response approaches $r=0.25$.

\begin{figure}[h!]
    \centering
  {\includegraphics[scale = 0.5]{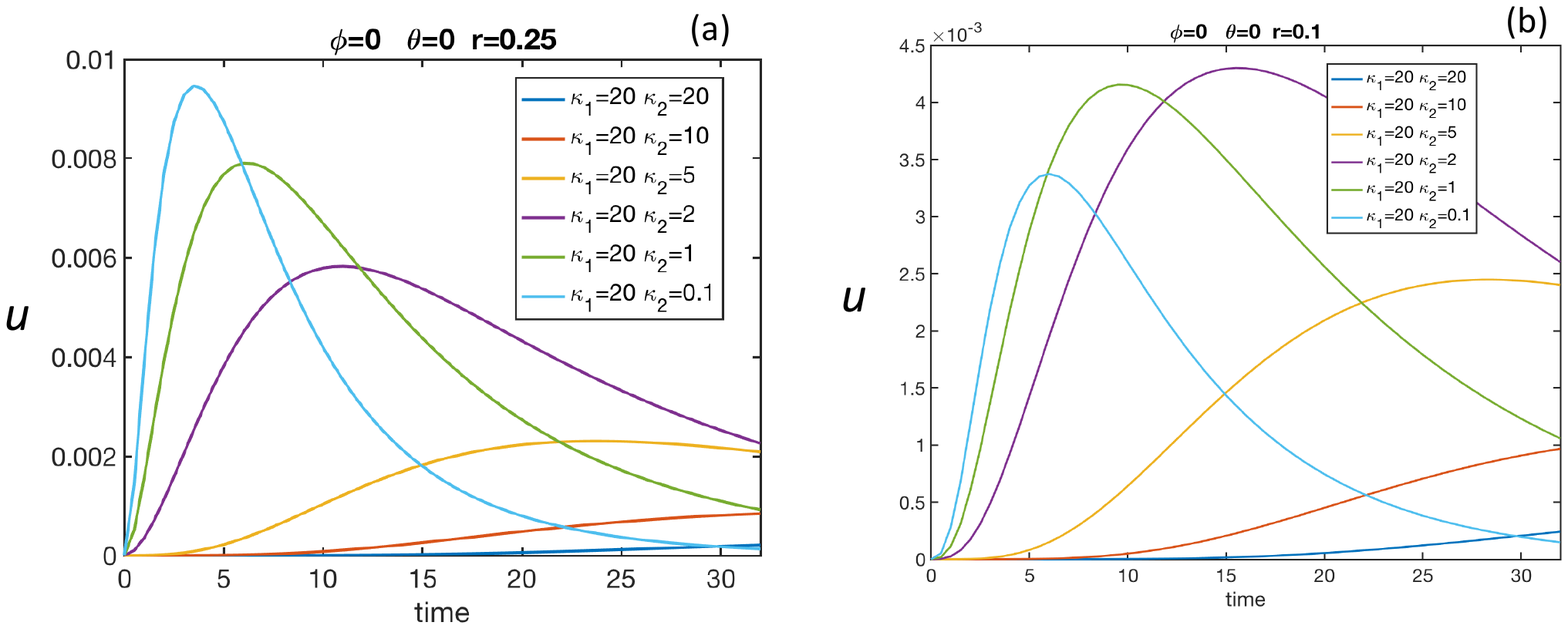} }
  \vspace{-0.4cm}
    \caption{Heat pulse response at $\phi=\theta=0$ and minor radius $r=0.25$, panel (a), and $r=0.1$, panel (b), for different levels of non-locality. }
    \label{pulses}
\end{figure}
Figure~\ref{pulses} shows the response curves for different levels of non-locality at two different locations: (a) $(\phi,\theta,r)=(0,0,0.25)$ which corresponds to the same torus where the initial pulse is introduced and at (b)  $(\phi,\theta,r)=(0,0,0.1)$ which corresponds to an inner torus. In the first case, the time of peaking and the magnitude of the peaking of the response curves is directly proportional to the level of non-locality. However, in the second case that involves the nonlocal propagation of the perturbations in the angle and the radial diffusion from the radius where the pulse is introduced, $r=0.25$, to the point of observation, $r=0.1$, the response curves show a more complicated dependence on $\kappa_2$. Understanding the dependence of the response curves on the nonlocality is key in the experimental characterization of transport in plasma physics, as well as geophysics and fluid dynamics in general. In fact, in fusion experiments the response of the plasma to ``cold" pulse perturbation is used to assess the possible existence of non-local transport, and to validate and calibrate models. An example of this, is the work on Ref.~\cite{del2008fractional} where non-local fractional transport models were used to interpret  experimental results on heat pulse propagation in the JET (Joint European Torus) tokamak fusion experiment.
The proposed transport model, as well as the numerical method, open the possibility of performing nonlocal transport simulations in fusion plasmas incorporating 3D effects and general nonlocal kernels. This type of numerical experiments are valuable to develop, calibrate, and validate predictive model of plasma transport. 

\section{Conclusion}\label{sec:con}
We developed a novel probabilistic scheme for a class of time-dependent semilinear nonlocal diffusion equations with volume constraints and nonlinear forcing. Rigorous error estimates of the proposed fully discrete method were given to demonstrate the first-order convergence with respect to time step size $\Delta t$. We presented two numerical examples illustrating specific aspects and advantages of the proposed numerical method. The first example showed our method's superior performance on 3D semilinear nonlocal diffusion problems in non-trivial domains. The theoretical results were numerically verified in this example. The second example considered an anisotropic nonlocal heat transport problem of interest to magnetically confined controlled nuclear fusion plasmas and illustrated the capability of the proposed method for handling complex physics problems.

We limited attention to  semilinear nonlocal diffusion equations with integrable kernels. 
Our next step is to extend the current scheme to enable its use in non-integrable kernels, e.g., the fractional Laplacian, which requires different discretization schemes for the corresponding stochastic processes and new quadrature rules for estimating the resulting conditional expectations.
Moreover, the current scheme does not include the capability of adaptive spatial mesh refinement to handle the scenario of having non-smooth or even discontinuous solutions. Since our numerical method does not require solving linear systems, it would be fairly easy to add a mesh refinement strategy to Steps 4 and 5 in Scheme \ref{scheme1}. Lastly, in more complex problems, the Euler scheme in Eq.~\eqref{Euler} is too simple to describe the spatio-temporal evolution of particles (electrons). This task, which is quite challenging in the context of PDE-based methods, can be accomplished by replacing the Euler scheme  with the temporal propagators provided by the external particle simulator.



    
    

\section*{Acknowledgments}
This material is based upon work supported in part by the U.S. Department of Energy, Office of Science, Office of Advanced Scientific Computing Research and Fusion Energy Science, and by the Laboratory Directed Research and Development program at the Oak Ridge National Laboratory, which is operated by UT-Battelle, LLC, for the U.S.~Department of Energy under Contract DE-AC05-00OR22725.

\bibliographystyle{siamplain}
\bibliography{PIDE_ref,nonlocal_ref}

\end{document}